\DeclareMathAlphabet{\mathbf}{OT1}{cmr}{bx}{it}
\newcommand{\vb}{\mathbf b}
\newcommand{\ve}{\mathbf e}
\newcommand{\vf}{\mathbf f}
\newcommand{\vg}{\mathbf g}
\newcommand{\vr}{\mathbf r}
\newcommand{\vs}{\mathbf s}
\newcommand{\vt}{\mathbf t}
\newcommand{\vv}{\mathbf v}
\newcommand{\vx}{\mathbf x}
\newcommand{\vy}{\mathbf y}
\newcommand{\vz}{\mathbf z}
\newcommand{\vnull}{\boldsymbol{0}}
\newcommand{\vG}{\mathbf G}
\newcommand{\vR}{\mathbf R}
\newcommand{\vV}{\mathbf V}
\newcommand{\vW}{\mathbf W}
\renewcommand{\d}{\,\mathrm d}
\crefname{hypothesis}{Hypothesis}{Hypotheses}
\newcommand{\sign}{\mbox{sign}}
\newcommand{\quadp}{\textrm{quad}}
\newcommand{\cut}[1]{}
\title{Krylov subspace recycling for matrix functions
}
\author{Liam Burke\thanks{Corresponding author, School Of Mathematics, Trinity College Dublin, College Green, Dublin 2, Ireland (\email{Email: burkel8@tcd.ie})}
\and Andreas Frommer\thanks{ Department of Mathematics, Bergische Universit\"at Wuppertal, 42097 Wuppertal, Germany}
\and Gustavo Ramirez-Hidalgo\footnotemark[2]
\and Kirk M. Soodhalter\thanks{School Of Mathematics, Trinity College Dublin, College Green, Dublin 2, Ireland}
}
\begin{document}


\maketitle

\begin{abstract}
We derive an augmented Krylov subspace method with subspace recycling for computing a sequence of matrix function applications on a set of vectors. The matrix is either fixed or changes as the sequence progresses. We assume consecutive matrices are closely related, and that the vectors are available only in sequence rather than simultaneously. We present three versions of the method with different practical implementations and demonstrate the effectiveness of the method using a range of numerical experiments with a selection of functions and matrices. We primarily focus on the sign function arising in the overlap formalism of lattice QCD. 
\end{abstract}

\begin{keywords}
Krylov subspace recycling, matrix functions
\end{keywords}

\begin{MSCcodes}
65F10, 65F30, 65F50
\end{MSCcodes}
\overfullrule=0pt 
\section{Introduction}
Many applications in scientific computing require the evaluation of
$f(A) \vb$ where 
$ A \in \mathbb{C}^{n \times n}$ is some matrix, $\vb \in \mathbb{C}^{n}$ is a vector, and $f$ is some complex valued matrix function such that $f(A)$ is defined. For example, the matrix exponential  function $f(A) = \mbox{exp}(A)$ arises in the solution of ordinary differential equations \cite{hochbruck1997krylov, hochbruck2010exponential}. The matrix logarithm $f(A) = \mbox{log}(A)$  arises in Markov model analysis \cite{singer1976representation}, and the matrix sign function $f(A) = \mbox{sign}(A)$  arises in lattice QCD simulations with overlap fermions \cite{knechtli2017lattice, frommer2000numerical}. Perhaps the most well known example is the solution of a linear system where we have $f(A) = A^{-1}$.

In High Performance Computing (HPC) applications the matrix $A$ is large and sparse, and sometimes not available explicitly. In this setting, ``matrix-free'' Krylov subspace methods are the only efficient means of computing an approximation to $f(A)\vb$ by projecting the problem onto a $j$-dimensional Krylov subspace $\mathcal{V}_{j}$  ($j \ll n$), and treating a smaller representation of the problem on this subspace via a direct method.
 
In this paper we develop a Krylov subspace recycling method for treating a sequence of a matrix functions applications of the form
\begin{equation}\label{eq:fAibi}
    f (A^{(i)}) \vb^{(i)}, \enspace i=1,2,\ldots,N,
\end{equation}
where $A^{(i)}, i = 1,2, \ldots $ is a sequence of matrices, and $\vb^{(i)}, i = 1,2,\ldots $ is a sequence of vectors.
Krylov subspace recycling aims to accelerate the convergence of a standard Krylov subspace method by augmenting the Krylov subspace $\mathcal{V}_{j}$ with an additional $k$ dimensional subspace $\mathcal{U}$ known as the \textit{augmentation subspace}. The space $\mathcal{U}$ is generally chosen to contain useful information to aid in accelerating the convergence of the method, and is often taken to be approximations to the $k$ eigenvectors corresponding to the $k$ eigenvalues closest to a singularity of the function $f$. The method in this paper is the first of its kind proposed to extend subspace recycling to a sequence of matrix function applications where the matrices in the sequence are slowly changing and/or the vectors $\vb^{(i)}$ change as the sequence progresses.

We develop an augmented Krylov subspace approximation to $f(A) \vb$ by deriving an augmented Krylov subspace approximation to the shifted linear systems appearing in its integral representation. Although augmentation of Krylov subspaces has already been proposed for a single matrix function application (see eg. \cite{bloch2007iterative, eiermann2011deflated}), the purpose of this paper is to derive a method which allows for subspace recycling between a sequence of $N$ matrix function applications of the form \eqref{eq:fAibi}.  It should be noted that unlike, e.g., the deflated-restarts approach \cite{eiermann2011deflated}, there is no restriction on our choice of $\mathcal{U}$, which allows us to reuse and refine $\mathcal{U}$ as $\vb^{(i)}$ and possibly $A^{(i)}$ are changing as the sequence progresses. 

The layout of this paper is as follows. In Section \ref{background} we give some background on the literature for augmented Krylov subspace methods for matrix functions. We also discuss the GCRO-DR algorithm for linear systems \cite{parks2006recycling} which serves as an inspiration for the method we derive in this work. In Section \ref{Arnoldiapproximation} we discuss the standard Arnoldi approximation to $f(A) \vb$ and in particular focus on its derivation using the Cauchy integral representation  of the matrix function. In Section \ref{signfunctioninLQCD} we discuss computational challenges arising in the overlap formalism of lattice QCD which has served as our main motivation for this work. In Section \ref{solutionofshiftedsystems} we introduce an augmented Krylov subspace framework for solving the shifted linear systems arising in the integral representations of $f(A) \vb$. In Section \ref{r(FOM)2} we show how this framework for shifted systems can be used to derive an augmented Krylov subspace approximation for $f(A)\vb$. We present three different versions of a FOM type method, leading to different implementations. A brief discussion on Stieltjes functions can be found in Section \ref{sec:stieltjes}. In Section \ref{harmritz} we derive a Ritz procedure for updating the recycling subspace between problems.  In Section \ref{numericalexperiments} we present numerical experiments and test the three versions of the method as both an augmented method for a single problem, and a recycling method for a sequence of problems. Brief conclusions are given in Section \ref{conclusions}.  

Throughout this paper, lower-case bold letters denote vectors, while upper case letters are used to denote matrices. Matrices are written in bold if they are explicitly defined as a concatenation of other matrices.

\section{Background}\label{background}
In recent years there has been an interest in improving computational aspects of the Arnoldi approximation for matrix functions in order to keep up with application demands and practical computing limitations. A procedure for implementing restarts into the Arnoldi approximation by computing an error function using divided differences was first introduced in \cite{eiermann2006restarted}. In \cite{frommer2014efficient} it was shown that expressing this error function as an integral allows for a more numerically stable restart procedure based on quadrature. A convergence analysis was later provided in \cite{FrGuSch14}.  A framework for treating the action of a matrix function on multiple right hand sides using \textit{block} Krylov subspace methods was derived in works such as \cite{frommer2017block, frommer2020block}.

The augmented Krylov subspace methods proposed  in the literature aim at treating problems where the matrix and vector remain fixed. One of the earlier works \cite{bloch2007iterative} has developed a deflation technique which precomputes Schur vectors from the original matrix and runs a single Arnoldi cycle which orthogonalizes the Arnoldi vectors against these Schur vectors. Similarly, a procedure for performing deflation between restarts of the Arnoldi process was introduced in \cite{eiermann2011deflated}, making it possible to avoid the computational expense of precomputing a deflation subspace from the original matrix. The methods in these papers use an \textit{Arnoldi-like} decomposition \cite{eiermann2011deflated}, arising from the fact that the augmented subspace is still a Krylov subspace. This means that a  generalization of the Arnoldi approximation could be used \emph{in that setting}. 

This is similar to ideas presented in the literature for Krylov subspace methods for the solution of a linear system. Morgan \cite{morgan2002gmres} showed that  when GMRES is deflated with harmonic Ritz vectors between restarts, the augmented Krylov subspace is still a Krylov subspace but with a different starting vector, allowing  the approximation to be computed in a manner similar to standard GMRES. The GMRES-DR algorithm is thus restricted to choose the augmentation subspace to be harmonic Ritz vectors, and in addition, can only be used to treat a single linear system.  The Generalized Conjugate Residual Orthogonalization method with Deflated Restarting (GCRO-DR) algorithm
  \cite{parks2006recycling} overcomes these difficulties by
combining ideas from GMRES-DR and GCRO \cite{de1996nested} to allow for an arbitrary choice of $\mathcal{U}$, as well as allowing for recycling between a sequence of changing linear systems. We refer the reader to \cite{ parks2006recycling,morgan2002gmres} for further details on these methods.

In this paper we extend the ideas in \cite{eiermann2011deflated} to methods that allow a recycling strategy to be employed for matrix function applications of the form (\ref{eq:fAibi}) using an \emph{arbitrary subspace} $\mathcal{U}$. The method is derived from a recycled shifted Full Orthogonalization Method (FOM) for the solution to a shifted linear system.

\section{The Arnoldi approximation to $f(A) \vb$} \label{Arnoldiapproximation}

We shortly summarize the essentials on the theoretical justification and the computation of the well-known Arnoldi approximation to $f(A) \vb$. We take inspiration from this approach when developing the augmented Krylov subspace approximation.

Any matrix function $f(A)$ is identical  to a polynomial $p(A)$, where $p$ interpolates $f$ at the eigenvalues of $A$ in the Hermite sense; see \cite{Higham2008}. Thus the Krylov subspace

\[
    \mathcal{K}_{j}( A , \vb) = \{ q(A) \vb : \mbox{ $q$ polynomial of degree $< j$}\}
\]
is the natural subspace from which to choose an appropriate approximation
to $f(A)\vb$. In the case of a linear system $f(A) = A^{-1}$, a Galerkin condition can be used to define an approximation $\vx_j \in \mathcal{K}_{j}( A , \vb)$ via the orthogonality condition
\begin{equation} \label{FOM:eq}
\vb - A\vx_j \perp \mathcal{K}_{j}( A , \vb).
\end{equation}

Usually, this {\em FOM approximation} \cite{saad2003iterative} is obtained from the 
Arnoldi process which computes a nested orthonormal basis for $\mathcal{K}_{j}( A , \vb)$. Arranging the 
basis vectors as the columns of a matrix $V_{j} \in \mathbb{C}^{n \times j}$, the Arnoldi process can be 
summarized via the Arnoldi relation  
\begin{equation}\label{Arnoldi}
    A V_{j} = V_{j+1} \overline{H}_{j} = V_{j} H_{j} + h_{j+1,j} \vv_{j+1}\ve_{j}^{T},
\end{equation}
where $\overline{H}_{j} \in \mathbb{C}^{(j+1) \times j}$ is an upper Hessenberg matrix and $H_{j} \in 
\mathbb{C}^{j \times j}$ arises from $\overline{H}_{j}$ by deleting the last row. The FOM iterate $\vx_j$ satisfying \eqref{FOM:eq} can then be computed as $\vx_j = \|\vb\| V_j H_j^{-1} \ve_1$, where $\ve_1$ is the first canonical unit vector in $\mathbb{C}^{j}$. 

A crucial observation for matrix functions is the fact that Krylov subspaces are invariant under shifts of the matrix by scalar multiples of the identity
\[
\mathcal{K}_{j}( A , \vb)  = \mathcal{K}_{j}( \sigma I  - A , \vb)
\]
for all $\sigma \in \mathbb{C}$, and that the FOM approximations for $ (\sigma I - A)^{-1}\vb$ are given by
\[
\vx_j(\sigma) = \|\vb\| V_j (\sigma I-H_j)^{-1} \ve_1,
\]
with $V_j$ and $H_j$ built from the Arnoldi process for $A$; see e.g. \cite{simoncini2003restarted}.

For functions $f$ which are analytic in an appropriate region, such shifted systems arise when expressing $f(A)$ using Cauchy's integral formula as given in the following definition; see \cite{Higham2008}. 

\begin{definition}
For the matrix $A \in \mathbb{C}^{n \times n}$ with spectrum $\Lambda(A)$, if $f$ is analytic in a region containing the closed contour $\Gamma$, which in turn contains $\Lambda(A)$ in its interior, $f(A)$ is given as \normalfont 
\begin{equation} \label{integral_representation:eq}
    f(A) = \frac{1}{2 \pi i} \int_{\Gamma} f(\sigma) (\sigma I - A)^{-1} \d \sigma.
\end{equation}
\end{definition}

We may thus write \begin{equation}\label{eq:caughy_integral_rep}
    f(A) \vb = \frac{1}{2 \pi i} \int_{\Gamma} f(\sigma) (\sigma I - A)^{-1} \vb \hspace{0.1cm} \d \sigma =  \frac{1}{2 \pi i} \int_{\Gamma} f(\sigma) \vx(\sigma) \hspace{0.1cm} \d \sigma
\end{equation}
where $\vx(\sigma)$ is the solution of the shifted linear system
\begin{equation}\label{matfunshiftedsystem}
    (\sigma I - A) \vx(\sigma) = \vb.
\end{equation}
Using the FOM approximations $\vx_j(\sigma)$ to the solution $\vx(\sigma)$ for each $\sigma$, we then get the approximation $\vf_{j} \approx f(A) \vb$ defined by
\begin{equation} \label{Arnoldi_Cauchy:eq}
    \vf_{j} := \frac{\| \vb \|}{2 \pi i} \int_{\Gamma} f(\sigma) V_{j} (\sigma I - H_{j})^{-1} \ve_{1} \hspace{0.1cm} \d \sigma     = \| \vb \| V_{j} f(H_{j}) \ve_{1},
\end{equation}
where the last equality only holds if the spectrum of $H_j$ lies inside the contour $\Gamma$. Approximation \eqref{Arnoldi_Cauchy:eq} is known
as the \textit{$j$th Arnoldi approximation} to $f(A)\vb$, and is defined if $f(H_j)$ is defined, i.e.\ when $f$ is defined on the spectrum of $H_j$ in the sense of \cite{Higham2008}. Note that we can express $f(H_j)$ via the Cauchy integral formula using any admissible contour which encloses the spectrum of $H_j$, but not necessarily that of $A$.

\section{The sign function in lattice QCD}
\label{signfunctioninLQCD} 

This work has been motivated by computational challenges arising in lattice QCD simulations \cite{joo2019status}.
The sign function of a large, non-Hermitian matrix appears in the overlap operator in lattice QCD.
In QCD, the Wilson-Dirac operator $D$ arises as a discretization of the Dirac operator on a finite four-dimensional Cartesian lattice. The Wilson-Dirac operator acts on discrete spinor fields which have twelve components per lattice point, corresponding to all possible combinations of three color and four spin indices \cite{gattringer2009quantum}.
More precisely, the Wilson-Dirac operator is defined via a configuration of {\em gauge links} $U_\nu(\ell)$ for each lattice site $\ell$ and each direction $\nu \in \{1,2,3,4\}$ in Euclidean space-time. The gauge links $U_\nu(\ell)$ belong to the $SU(3)$ symmetry group. In the Wilson-Dirac operator, the twelve spinor components at lattice site $\ell$ then couple to the spinors at the (periodic) nearest neighbours in all four dimensions. The coupling coefficient for a neighbour in positive direction $\nu$ is given as the tensor product of $U_\nu(\ell)$ and the $4 \times 4$ matrix $I+\gamma_\nu$, and similarly for the coupling in negative direction. The matrices $\gamma_\nu$ are representations of a Clifford algebra and do not depend on $\ell$. 

The {\em overlap Dirac operator}, $D_{ov}(\mu)$ preserves chiral symmetry, an important physical property, on the lattice while other discretizations such as, e.g., $D$ do not. 
To be specific, the overlap Dirac operator takes the form~\cite{bloch2007iterative,hernandez1999locality,Neuberger1998}
\begin{equation*}\label{eq:overlap_operator}
    D_{ovl}(\mu) = I + \rho \Gamma_{5} \sign{( \underbrace{\Gamma_{5} D(\kappa,\mu) )}_{=: Q(\kappa,\mu)}}.
\end{equation*}
Here, $\Gamma_5$ is a simple diagonal matrix which acts as the identity on spinor components belonging to spins 1 and 2 and as the negative identity on  those belonging to spins 3 and 4, and $\rho \in (0,1)$ is a mass parameter, typically close to $1$.
In the argument of the sign function, $D(\kappa,\mu)$ is the Wilson-Dirac operator with $\kappa = \frac{4}{3}\kappa_{c}$ where $\kappa_{c}$ is the ``critical value'', i.e.\ the smallest value of $\kappa$ for which the smallest real part of an eigenvalue in $D(\kappa,\mu)$ is zero. The parameter
$\mu$ designates the chemical potential, and it is the presence of $\mu \neq 0$ that makes $Q(\kappa,\mu)$ non-Hermitian; see \cite{bloch2007iterative}. For notational simplicity, we abbreviate $Q(\kappa,\mu)$ as $Q_\mu$ and $D(\kappa,\mu)$ as $D_\mu$ from now on. Our interest in this paper is on computing $\mbox{sign}(Q_{\mu})\vb$; in \cite{bloch2007iterative} different methods are explored for this problem.


The overlap formalism has significant importance for 
certain types of observables 
to be 
computed in lattice QCD simulations. These simulations are thus dependent on overcoming the algorithmic complexity challenges associated with the overlap discretization.

Lattice QCD computations with the overlap formalism are dominated by computing solutions to a sequence of $N$ linear systems of the form
\begin{equation}\label{eq:overlap_linear_system}
  D_{ovl}^{(i)} \vx^{(i)} = \vb^{(i)}, \hspace{1cm} i=1,\ldots,N.
\end{equation}
The computation of quark propagators \cite{joo2019status} is a special case wherein the matrices $D_{ovl}^{(i)}$ remain fixed and the vectors $\vb^{(i)}$ are changing from one system to the next. In the initial phase of generating QCD gauge fields in the Markov Chain Monte Carlo process, the configuration of gauge links and thus the matrix $D_{ovl}^{(i)}$ is slowly changing for every system \cite{cundy2009numerical}.

The lattices in lattice QCD are large and thus the Wilson-Dirac matrix $D_\mu$ as well as $Q_\mu$ are large and sparse. Due to the sign function, the overlap operator represents a large and full matrix and, as such, cannot be computed explicitly. Therefore,  computing an approximation of the solution for the systems in \eqref{eq:overlap_linear_system} can only be achieved via an iterative method, usually a preconditioned Krylov subspace method. 
Each iteration 
then requires to compute the application of $D^{(i)}_{ovl}$ to a new vector. Thus, for each system $i$ in \eqref{eq:overlap_linear_system} this requires the evaluation of $\mbox{sign}(Q_{\mu}^{(i)}) \vv^{(\ell)}$ for a fixed matrix $Q_{\mu}^{(i)}$ sequentially for a series of vectors $\vv^{(\ell)}$, $\ell = 1, \ldots , j$, where $j$ is the number of iterations.  After each system solve, the matrix $Q_{\mu}^{(i)}$  either remains fixed, or changes to a new matrix $Q_{\mu}^{(i+1)}$, and the process repeats.

The application of the sign function of each matrix on a set of vectors is a major computational bottleneck in the overlap formalism, and it requires an additional inner Krylov subspace method to compute the action of $\sign(Q_{\mu}^{(i)})$ within each single iterative step in the solution process for each system in the sequence \eqref{eq:overlap_linear_system}. There has been much interest in developing new methods to address the computational challenges associated to the overlap formalism. We refer the reader to works such as \cite{frommer2000numerical}, as well as the series of publications \cite{cundy2009numerical,arnold2005numerical,cundy2005numerical,van2002numerical}. We also refer the reader to \cite{BraFroKaLeRoStr2015} where a multigrid preconditioning for the overlap operator was considered.
In previous work \cite{van2002numerical}, exact deflation was used as a means to accelerate the inner iteration using a fixed number of the smallest eigenpairs of each $Q_{\mu}^{(i)}$ to high accuracy before starting the computation of the sign function.   
We propose an alternative using Krylov subspace recycling instead, which interweaves the computation of increasingly accurate eigenmodes with the computation of the sign function itself. 

\section{A derivation of an augmented Krylov subspace approximation to shifted linear systems
}\label{solutionofshiftedsystems}
We derive an augmented Krylov subspace approximation for $f(A) \vb$ in a similar manner as for the Arnoldi approximation presented in Section \ref{Arnoldiapproximation} for the non-augmented case. We first derive an augmented Krylov subspace approximation for the family of shifted linear systems (\ref{matfunshiftedsystem}) appearing in the integral representation (\ref{eq:caughy_integral_rep}). We make use of a residual projection framework which describes almost all augmented Krylov subspace methods for standard non-shifted linear systems. The framework was originally proposed in terms of typical residual constraints in works such as \cite{gaul2014recycling, gaul2013framework, gutknecht2012spectral,  gutknecht2014deflated}, but a recent survey \cite{Soodhalter2020ASO} extends the framework to the case of arbitrary search and constraint spaces. The work in this section is based on the most general form of the framework as presented in this survey.

The  framework describes augmented Krylov subspace methods broadly in two main steps. The first applies a standard Krylov subspace method to an appropriately projected problem, and the second performs an additional projection into the augmentation subspace.
We present an extension of this framework to the case of a family of shifted linear systems (\ref{matfunshiftedsystem}). We assume that the initial approximations for all shifts are zero and that the approximation $\vx_j(\sigma)$ for every shift $\sigma$ is of the form
\[
\vx_j(\sigma) =  \vs_j(\sigma) + \vt_j(\sigma) 
\]
with $ \vt_{j}(\sigma)$ taken from the same $j$-dimensional Krylov subspace $\mathcal{V}_{j} $ for all $\sigma$ and $\vs_j(\sigma)$ from a fixed $k$- dimensional augmentation subspace $\mathcal{U}$ which does not depend on the iteration step $j$. We use the $j$ subscript on $\vs$ since, as we show, the contribution from the augmentation subspace also depends on $j$ even though $\mathcal{U}$ is fixed. With the columns of $U$ spanning $\mathcal{U}$ and those of $V_j$ spanning $\mathcal{V}_j$ the approximation  $\vx_{j}(\sigma)$ at step $j$ can be expressed as 
\[
    \vx_{j}(\sigma) =  \vs_{j}(\sigma) + \vt_{j}(\sigma)  =   U \vz_{j}(\sigma) + V_{j} \vy_{j}(\sigma)
\]
with $\vz_{j}(\sigma) \in \mathbb{C}^{k}$ and $\vy_{j}(\sigma) \in \mathbb{C}^{j}$ to be determined. This is done by imposing that for all shifts $\sigma$ the residual be orthogonal to the sum of some other $j$-dimensional Krylov subspace $\widetilde{\mathcal{V}}_{j}$, spanned by the columns of $\widetilde{V}_j \in \mathbb{C}^{n\times j}$,  and some other augmentation subspace $\widetilde{\mathcal{U}}$, spanned by the columns of $\widetilde{U} \in \mathbb{C}^{n\times k}$,
\[
    \vr_{j}(\sigma) = \vb - (\sigma I - A)(  U \vz_{j}(\sigma) + V_{j} \vy_{j}(\sigma)   ) \perp  \widetilde{\mathcal{U}} + \widetilde{\mathcal{V}}_{j}.
\]
 yielding the following linear system for $\vz_{j}(\sigma)$ and $\vy_{j}(\sigma)$:
\begin{equation}\label{eq:mat_eqn}
\begin{bmatrix} \widetilde{U}^{*}(\sigma I - A) U &    \widetilde{U}^{*}(\sigma I - A) V_{j} \\
   \widetilde{V}^{*}_{j}(\sigma I - A) U &  \widetilde{V}^{*}_{j}(\sigma I - A) V_{j}
  \end{bmatrix} \begin{bmatrix}
        \vz_{j}(\sigma) \\
        \vy_{j}(\sigma) 
    \end{bmatrix} = \begin{bmatrix}
        \widetilde{U}^{*} \vb \\
       \widetilde{V}_{j}^{*} \vb
  \end{bmatrix}.
\end{equation}

Eliminating the bottom left block of (\ref{eq:mat_eqn}) we can express $\vy_j(\sigma)$ as the solution of 
\begin{equation} \label{yj:eq}
    \widetilde{V}_{j}^{*} (I - P_{\sigma})(\sigma I - A) V_{j} \vy_{j}(\sigma) =  \widetilde{V}_{j}^{*} (I - P_{\sigma}) \vb,
\end{equation}
with the projector $P_{\sigma} := (\sigma I - A) U (\widetilde{U}^{*}(\sigma I - A) U)^{-1} \widetilde{U}^{*}$.
The coefficients $\vz_j(\sigma)$ for the contribution from the augmenting subspace are then obtained from $\vy_j(\sigma)$ as
\begin{equation} \label{zj:eq}
    \vz_{j}(\sigma) = (\widetilde{U}^{*} (\sigma I - A) U)^{-1}\widetilde{U}^{*} \vb -  (\widetilde{U}^{*} (\sigma I - A) U)^{-1}\widetilde{U}^{*} (\sigma I - A) V_{j} \vy_{j}(\sigma).
\end{equation}

From \eqref{yj:eq} we see that computing $\vy_{j}(\sigma)$ is equivalent to applying the following projection method.
\medskip

\begin{mdframed}
   Find $\vt_{j}(\sigma) \in \mathcal{V}_{j}$ as an approximate solution corresponding to shift $\sigma$ for the projected and shifted linear system
   \begin{equation} \label{projectedproblem}
    (I - P_{\sigma})   ( \sigma I - A) \vx(\sigma) = (I - P_{\sigma}) \vb
   \end{equation}
    such that $\vr_{j}(\sigma) = (I - P_{\sigma} ) (\vb - (\sigma I - A) \vt_{j}(\sigma)) \perp \widetilde{\mathcal{V}}_{j} $.
\end{mdframed}
\medskip 

The linear system \eqref{projectedproblem} is clearly singular, due to the left multiplication of the projector $(I - P_{\sigma})$, and thus the solution is non-unique. However, after computing $\vy_{j}(\sigma)$, a second projection step in computing $\vz_{j}(\sigma)$ then recovers the unique solution to the original linear system $(\sigma I - A) \vx(\sigma) = \vb$. Additionally, the subspaces $\widetilde{\mathcal{V}}_{j}$ and $\mathcal{U}$ should be chosen such that the projector $P_{\sigma}$ is well defined.

Although a natural choice for $\mathcal{V}_{j}$ is the Krylov subspace corresponding to the projected problem 
\[
\mathcal{K}_{j}((I - P_{\sigma})(\sigma I - A), (I - P_{\sigma}) \vb).
\] 
(see e.g, \cite{Soodhalter2020ASO}) this Krylov subspace is built from a projected operator which depends on $\sigma$, and thus shift invariance no longer holds, and a practical implementation would require generation of a separate Krylov subspace for each $\sigma$. This has been shown to be one of the main difficulties associated with developing a subspace recycling method for the solution of shifted systems \cite{soodhalter2016block,soodhalter2014krylov}.

We propose to overcome this difficulty using the observation made in \cite{burke2022augmented}, that the projected problem (\ref{projectedproblem}) has an unprojected equivalent given by:
\medskip 

\begin{mdframed}
   Find $\vt_{j}(\sigma) \in \mathcal{V}_{j}$  as an approximate solution corresponding to shift $\sigma$ for the shifted linear system
   \begin{equation}\label{eq:unprojected_problem}
     ( \sigma I - A) \vx(\sigma) =  \vb
   \end{equation}
    such that $\vr_{j}(\sigma) =  \vb - (\sigma I - A) \vt_{j}(\sigma) \perp (I - P_{\sigma})^{*} \widetilde{\mathcal{V}}_{j} $.
\end{mdframed}
\medskip
We can thus solve the unprojected problem \eqref{eq:unprojected_problem} using the unprojected, shift-invariant Krylov subspace $ \mathcal{K}_{j}(A,\vb)$, and deal with the projector $(I - P_{\sigma})$ in the computation for $\vy_{j}(\sigma)$ in \eqref{yj:eq}.
Once we have computed $\vy_{j}(\sigma)$ we can incorporate $\vz_{j}(\sigma)$ from \eqref{zj:eq} implicitly into the full solution approximation to get
\begin{equation}\label{eq:fullshiftedsolution}
    \vx_{j}(\sigma) =  V_{j} \vy_{j}(\sigma) + U(\widetilde{U}^{*} (\sigma I - A) U)^{-1}\widetilde{U}^{*} (\vb -  (\sigma I - A) V_{j} \vy_{j}(\sigma)),
\end{equation}
which can also be rewritten as
\begin{equation}\label{eq:fullshiftedsolutioncompact}
    \vx_{j}(\sigma) = \vV_{j} \begin{bmatrix}
         \widetilde{\vV}^{*}_{j}(\sigma I - A) \vV_{j}
    \end{bmatrix}^{-1} \widetilde{\vV}^{*}_{j} \vb
\end{equation}
with  $ \vV_{j} = \begin{bmatrix} U & V_{j} \end{bmatrix} \in \mathbb{C}^{N \times (k+j)}$ and $\widetilde{\vV}_{j} = \begin{bmatrix} \widetilde{U} & \widetilde{V}_{j} \end{bmatrix} \in \mathbb{C}^{N \times (k+j)}$.

Approximations (\ref{eq:fullshiftedsolution})
and (\ref{eq:fullshiftedsolutioncompact}) are thus two mathematically equivalent approximations to $\vx(\sigma)$, and can be used to derive any new augmented unprojected Krylov subspace method for shifted systems (\ref{matfunshiftedsystem}) using appropriate choices of the subspaces $\widetilde{\mathcal{V}}_{j}$ and $\widetilde{\mathcal{U}}$. For example, the choice $\widetilde{\mathcal{V}}_{j} = A \mathcal{K}_{j}(A,\vb)$ and $\widetilde{\mathcal{U}} = A \mathcal{U}$ leads to an unprojected augmented GMRES-type method for shifted systems, while the choice $\widetilde{\mathcal{V}}_{j} = \mathcal{K}(A,\vb)$ and $\widetilde{\mathcal{U}} = \mathcal{U}$ leads to an unprojected augmented FOM-type method which can be viewed as an extension of the unprojected augmented FOM method derived in \cite{burke2022augmented} to the case of shifted systems of the form (\ref{matfunshiftedsystem}). Since we are dealing with matrix functions, where the standard Arnoldi approximation is of FOM-type, we now focus on FOM type methods and make these choices. 

\section{Recycled FOM (quad) for functions of matrices}\label{r(FOM)2}

Taking the results from the previous section and using the integral representation \eqref{eq:caughy_integral_rep}
we explicitly define the augmented unprojected FOM approximation of $f(A)\vb$ with subspaces  $\widetilde{\mathcal{V}}_{j} = \mathcal{V}_{j} = \mathcal{K}_{j}(A,\vb)$ and $\widetilde{\mathcal{U}} = \mathcal{U}$ as
\begin{align}\label{integralform}
    f(A) \vb \approx \frac{1}{2 \pi i} \int_{\Gamma} f(\sigma) \vx_{j}(\sigma) \hspace{0.1cm} \d \sigma,
\end{align}
where $\vx_j(\sigma)$ is the shifted augmented FOM approximation to the solution of the shifted system $(\sigma I-A)\vx = \vb$ given in (\ref{eq:caughy_integral_rep}). The use case that we have in mind is that we must compute a sequence of expressions of the form $f(A)\vb$ with varying $\vb$ and/or $A$ and that we use information acquired from the computation of one expression to obtain good choices for the augmenting space $\mathcal{U}$. Due to the need to evaluate the integral expression using quadrature, we refer to our new method as \textit{recycled FOM (quad)}.

Depending on how we treat \eqref{integralform} computationally, and in particular on how we use quadrature rules on the different integrals  occurring in equivalent representations of \eqref{integralform}, we obtain three versions of the rFOM (quad) method that we discuss below. We use the notation $n_{quad}$ to denote the number of quadrature points used, and $z_\ell$ and $\omega_\ell$, $\ell = 1,\ldots,n_{\quadp}$ to denote the nodes and the weights, respectively, of a quadrature rule for the contour $\Gamma$ which yields the approximation
\[
 \frac{1}{2\pi i} \int_{\Gamma} f(\sigma) \vx_{j}(\sigma) \hspace{0.1cm} \d \sigma \approx \sum_{\ell=1}^{n_{\quadp}} \omega_\ell f(z_\ell) \vx_{j}(z_{\ell}).
\]

Approximating $f(A) \vb$ in its integral form via quadrature has been discussed in works such as  \cite{boricci2003qcd, hale2008computing}. In addition, numerical quadrature also plays a role in numerically stable restart procedures in the Arnoldi approximation \cite{frommer2014efficient}. We stress 
that our three versions differ because they use numerical integration for different integrals.

Due to the nature of the expressions we derive inside the Cauchy integral, it is not possible to modify the contour to just contain the spectrum of a smaller matrix as with the standard Arnoldi approximation; see Section~\ref{Arnoldiapproximation}.  
As we see it may be necessary to enlarge the contour to enclose additional points depending on the choice of $\mathcal{U}$.  This is a potential limitation of our method worth considering, since it requires \textit{a priori} information on the spectrum of each matrix in order to do the quadrature approximation. We note that in our numerical experiments we constructed a contour which we knew only enclosed the spectrum of $H_{j}$, and this did not pose any issues. 

The common feature of all three versions is that we perform the Arnoldi process to compute the orthogonal basis of $\mathcal{V}_j = \mathcal{K}_{j}(A,\vb)$. This requires $j$ matrix-vector multiplications and $\mathcal{O}(nj^2)$ arithmetic work for the orthogonalization. The three versions differ in 
the integrals that are evaluated numerically, and we discuss the cost of each version below. 
For further implementation details, we refer the reader to the MATLAB code\footnote{Code available at \url{https://github.com/burkel8/MatrixFunctionRecycling }}.

\subsection{A quadrature based implementation of rFOM (quad)}
We derive a quadrature based implementation of rFOM (quad) by substitution of (\ref{eq:fullshiftedsolution}) into \eqref{integralform} with $\widetilde{\mathcal{U}} = \mathcal{U}$ and $\widetilde{\mathcal{V}}_{j} = \mathcal{K}_{j}(A, \vb)$. 
The solution approximation (\ref{eq:fullshiftedsolution}) first requires solving equation (\ref{yj:eq}) for $\vy_{j}(\sigma)$, which can be done cheaply after simplifying (\ref{yj:eq}) to
\begin{equation} \label{eq:version1_y}
    (V_{j}^{*}(I-P_{\sigma}) V_{j+1}(\sigma \overline{I} - \overline{H}_{j})) \vy_{j}(\sigma) = V^{*}_{j} (I-P_{\sigma}) \vb,
\end{equation}
where $C = AU$,  $P_{\sigma} = (\sigma U - C) (\sigma U^{*} U - U^{*} C)^{-1} U^{*}$ and $\overline{I} \in \mathbb{R}^{(j+1) \times j}$ is a $j \times j$ identity matrix padded with an additional row of zeros.
The expression (\ref{eq:fullshiftedsolution}) is then given as
\begin{equation}\label{eq:shifted_sol_1}
    \vx_{j}(\sigma) =  V_{j} \vy_{j}(\sigma) + U(\sigma U^{*} U - U^{*} C)^{-1} U^{*}( \vb -  V_{j+1}(\sigma \overline{I} - \overline{H}_{j})\vy_{j}(\sigma),
\end{equation}
and, for numerical stability, should be computed after orthonormalizing the columns of $U$ (such that $U^*U = I$).

Using (\ref{eq:shifted_sol_1}), the first version of rFOM (quad) computes the approximation $\widetilde{\vf}_{1}$ to $f(A)\vb$ as
\begin{align*}
    \widetilde{\vf}_{1} := &
      V_{j} \sum_{\ell=1}^{n_{\quadp}} \omega_\ell f(z_{\ell}) \vy_{j}(z_{\ell}) \\
      &+   U \sum_{\ell=1}^{n_{\quadp}} \omega_\ell f(z_{\ell}) (  z_{\ell} I - U^{*} C )^{-1} ( U^{*} \vb - U^{*} V_{j+1}(z_{\ell} \overline{I} - \overline{H}_{j})\vy_{j}(z_{\ell})).
\end{align*}

For computational efficiency, it is appropriate to compute the coefficient matrix $V_{j}^{*}(I-P_{\sigma}) V_{j+1}(\sigma \overline{I} - \overline{H}_{j})$ for $\vy_j$ in \eqref{eq:version1_y} as 
\begin{equation} \label{eq:matrix-product}
 \sigma I - H_{j}  - K_{j}(\sigma)L(\sigma)M(\sigma \overline{I} - \overline{H}_j) \
\end{equation}
with
\[
K_{j}(\sigma) = \sigma V_j^*U-V_j^*C, \enspace 
L(\sigma) = (\sigma I -U^*C)^{-1}, \enspace \mbox{and} \enspace 
M_{j} = U^*V_{j+1},  
\]
which avoids redundant computations. 

This algorithm requires non-singularity of the matrix $U^{*}(\sigma I - A) U$, which from the following proposition, is shown to be guaranteed provided that the contour of integration encloses the field of values of $A$, defined as 
\[
\mathcal{F}(A) := \{ \vt^{*} A \vt, \vt \in \mathbb{C}^n,  \| \vt \|_{2} = 1 \},
\] 
which is a convex superset of the spectrum of $A$; see \cite{HornJohnson}.

\begin{proposition}
If $U$ has orthonornal columns and the contour $\Gamma$ encloses $ \mathcal{F}(A)$, then the matrix $U^{*}(\sigma I - A) U$ is non-singular for all $\sigma \in \Gamma$.
\end{proposition}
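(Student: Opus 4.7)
The plan is to prove the contrapositive: if $\textbf{U}^*(\sigma\textbf{I}-\textbf{A})\textbf{U}$ is singular, then $\sigma \in \mathcal{F}(\textbf{A})$, which contradicts $\sigma \in \Gamma$ together with $\Gamma$ enclosing $\mathcal{F}(\textbf{A})$ (since the enclosing contour lies outside the enclosed set).

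First I would assume for contradiction that there exists $\sigma \in \Gamma$ and a nonzero vector $\textbf{w} \in \mathbb{C}^{k}$ with $\textbf{U}^*(\sigma\textbf{I}-\textbf{A})\textbf{U}\textbf{w} = 0$. Left multiplying by $\textbf{w}^*$ gives the scalar identity $\textbf{w}^*\textbf{U}^*(\sigma\textbf{I}-\textbf{A})\textbf{U}\textbf{w} = 0$. Setting $\textbf{t} := \textbf{U}\textbf{w}$, the full rank hypothesis on $\textbf{U}$ guarantees that $\textbf{t} \neq 0$, so we may normalize to $\widehat{\textbf{t}} := \textbf{t}/\|\textbf{t}\|_{2}$.

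Next I would rearrange the identity $\sigma \textbf{t}^*\textbf{t} = \textbf{t}^*\textbf{A}\textbf{t}$ to obtain
\[
\sigma = \widehat{\textbf{t}}^{\,*}\textbf{A}\widehat{\textbf{t}},
\]
which shows $\sigma \in \mathcal{F}(\textbf{A})$ by definition of the field of values. The final step is to observe the contradiction: since $\Gamma$ is a closed contour enclosing $\mathcal{F}(\textbf{A})$ in its interior, any $\sigma \in \Gamma$ lies strictly outside $\mathcal{F}(\textbf{A})$.

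There is no real obstacle here; the argument is essentially a one line computation once the right element of the kernel is pulled back through $\textbf{U}$. The only subtlety worth flagging is the use of full column rank of $\textbf{U}$ to ensure $\textbf{t} \neq 0$, and the (standard) convention that ``$\Gamma$ encloses $\mathcal{F}(\textbf{A})$'' places $\Gamma$ itself outside the closed field of values so that the intersection $\Gamma \cap \mathcal{F}(\textbf{A})$ is empty.
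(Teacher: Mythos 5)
Your proof is correct and follows essentially the same route as the paper: left-multiply the kernel relation by the conjugate transpose of the kernel vector, use full column rank to pull back a nonzero vector $\textbf{t} = \textbf{U}\textbf{w}$, normalize, and conclude $\sigma \in \mathcal{F}(\textbf{A})$, contradicting $\sigma \in \Gamma$. The only cosmetic difference is that the paper routes the normalization through a QR factorization $\textbf{U} = \textbf{Z}\textbf{S}$, whereas you normalize $\textbf{U}\textbf{w}$ directly — the underlying argument is identical.
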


\begin{proof}
We have $U^*(\sigma I-A)U =  \sigma I - U^*AU$ and thus
\[
\mathcal{F}(U^*(\sigma I-A)U ) = \sigma - \mathcal{F}(U^*AU) \subseteq \sigma - \mathcal{F}(A).
\]
Since the contour $\Gamma$ encloses $\mathcal{F}(A)$, we have $0 \not \in \sigma- \mathcal{F}(A)$ for all $\sigma \in \Gamma$, and thus $0 \not \in \mathcal{F}(U^*(\sigma I-A)U$), implying that $U^*(\sigma I-A)U$ is non-singular.
\end{proof}

\begin{algorithm}
\caption{rFOM (quad) version ~1} 
\label{version1:alg}
\begin{algorithmic}[1]
\STATE{\textbf{Input:} $A \in \mathbb{C}^{n \times n}$,  $\vb \in \mathbb{C}^{n}$, scalar function $f$, $U \in \mathbb{C}^{n \times k} $ whose columns span the augmenting recycled subspace $\mathcal{U}$,  $C = A U$, Arnoldi cycle length $j$ }
\STATE{Determine contour containing the spectrum of  $A$ and determine quadrature rule with nodes $z_\ell$ and weights $\omega_\ell, \ell = 1,\ldots,n_{\quadp}$}
\STATE{Set $\vt_{1} = \vnull  \in \mathbb{C}^{j}$, $\vt_{2} = \vnull \in  \mathbb{C}^{k}$} 
\FOR{$\ell = 1, \ldots,  n_{\quadp}$}
\STATE{$\mu \leftarrow \omega_\ell f(z_\ell)$} 
\STATE{Solve $( z_{\ell} I - H_{j}  - K_{j}(z_{\ell})L_{j}(z_{\ell})M_{j}(z_{\ell} \overline{I} - \overline{H}_j)) \vy = V^{*}_{j} (I-Q) \vb$ for $\vy$ 
} 
\STATE{$\vt_{1} \leftarrow \vt_{1} + \mu \hspace{0.1cm} \vy$}
\STATE{$\vt_{2} \leftarrow \vt_{2} + \mu \hspace{0.1cm} (z_\ell I - U^{*} C)^{-1}(U^{*} \vb - U^{*} V_{j+1}(z_\ell \overline{I} - \overline{H}_{j}) \vy)$ \label{line:t2_update}}
\ENDFOR
\STATE{$\widetilde{\vf}_{1} \leftarrow V_{j} \vt_{1} + U \vt_{2}$}
\STATE{Construct new orthonormal $U$,  and $C$ such that $C = A U$.}
\end{algorithmic}
\end{algorithm}

Table \ref{tab:version1_work} gives details on the arithmetic cost of an efficient implementation of the algorithm, excluding the matrix-vector products with $A$. It reports the highest order leading term in the respective arithmetic work in units of one multiplication plus one addition. We do not count matrix scalings, matrix additions and matrix-vector products since their cost is of lower order than that for matrix products and inversions. We also neglect the QR factorization required to obtain the orthogonal columns in  $U$ since this is common to all methods. We treat the Hessenberg matrices as full matrices, and we assume that we evaluate the matrix products in \eqref{eq:matrix-product} from right to left,
and that we compute the inverse $L(\sigma)$ explicitly (with work $k^3$). Note that we don't count any work for line~\ref{line:t2_update}, since the computations there can be arranged as matrix-vector products (and vector additions) only.

\begin{table}
\centering
\begin{tabular}{|l|c|c|} \hline
operation & arithmetic cost & how often \\ \hline 
$U^*C$   &  $k^2n$ & 1  
\\
$V_{j}^{*} U$ and $V_{j}^{*} C$ & $2 k j n $ & 1
\\
$M_{j} = U^{*} V_{j+1}$ & $k (j+1) n$ & 1
\\
$M_{j} (z_\ell I - \overline{H}_{j}) := R(z_\ell) $ & $k j (j+1)$  & $n_{quad}$
\\
$L_{j}(z_\ell) = (z_\ell U^{*} U - U^{*} C)^{-1} $ & $k^3$ & $n_\quadp$ 
\\
$L_{j}(z_\ell) R(z_\ell) =: S(z_\ell)$ & $k^{2} j$ & $n_{quad}$
\\
$K_{j}(z_\ell)S(z_{\ell}) =: T(z_\ell)$ & $k j^{2}$ & $n_{quad}$
\\
Solve for $\vy$, system matrix is $T(z_\ell)$ & $\frac{1}{3} j ^{3}$ & $n_{quad}$
\\
\hline \hline
\multicolumn{3}{|l|}{\textbf{total cost: $n (k^{2} + 2 j k + k(j+1)) + n_{quad}(k j (j+1) + k^{3} + k^{2} j + k j^{2} + \frac{1}{3} j^{3})$} }
\\ 
\hline
\end{tabular}
\caption{Arithmetic cost of rFOM (quad) version~1. \label{tab:version1_work}}
\end{table}

\subsection{rFOM (quad) version ~2}\label{sec:version-2}
We present Algorithm \ref{version2:alg} as a potential alternative to Algorithm  \ref{version1:alg}. It treats the contribution from the Krylov subspace and the augmentation space as a single unknown by making use of an augmented Arnoldi relation as shown in the following proposition. 

\begin{proposition}
 For the choices of subspaces  $\widetilde{\mathcal{V}}_{j} = \mathcal{V}_{j} = \mathcal{K}_{j}(A,\vb)$ and $\widetilde{\mathcal{U}} = \mathcal{U}$, the augmented Krylov subspace approximation \eqref{eq:fullshiftedsolution}  
to the shifted system 
\[
    (\sigma I-A)\vx = \vb
\]
can be written as
\begin{equation}\label{approximation2x}\normalfont
    \vx_{j}(\sigma) = \vV_{j} (\vV_{j}^{*}\vW_{j}(\sigma I - \vG_{j}) + \vV^{*}_{j} \vR_{\sigma})^{-1} \vV_{j}^{*}\vb,
\end{equation}
where 
\begin{equation*} \label{extended_Hessenberg:eq} \normalfont
\vG_{j} =  \begin{bmatrix} I & 0 \\ 0 & H_{j} \end{bmatrix} \in \mathbb{C}^{(k+j) \times (k+j)}, \enspace \vW_{j} = \begin{bmatrix} C & V_{j} \end{bmatrix}, \enspace  C = A U
\end{equation*}
and \normalfont
\begin{align*}\vR_{\sigma} = \begin{bmatrix} \sigma (U - C) & -h_{j+1,j} \vv_{j+1} \ve_{j}^{T}
\end{bmatrix} \in \mathbb{C}^{n \times (k+ j)}.
\end{align*}
\end{proposition}
\begin{proof}
Using the fact that $C = A U$ we may write
\begin{eqnarray*}
    (\sigma I - A) U &=& C(\sigma I - I) + \sigma ( U - C),
\end{eqnarray*}
which, when combined with the Arnoldi relation (\ref{Arnoldi}) yields the shifted augmented Arnoldi relation
\begin{eqnarray}
    \lefteqn{(\sigma I - A) \begin{bmatrix} U & V_{j} \end{bmatrix}} &  & \nonumber \\
    & = & \begin{bmatrix}
       C & V_{j}
    \end{bmatrix} \begin{bmatrix}
       \sigma I - I & 0 \\
       0 & \sigma I - H_{j}
    \end{bmatrix} + \begin{bmatrix}
       \sigma (U - C) & -h_{j+1,j} \vv_{j+1} \ve_{j}^{T}
    \end{bmatrix}. \label{augmentedArnoldi}
\end{eqnarray}
The result now follows from \eqref{eq:fullshiftedsolutioncompact} by replacing the term $(\sigma I-A)\begin{bmatrix} U &V_{j} \end{bmatrix}$ with the right hand side of \eqref{augmentedArnoldi},
and setting $\widetilde{\vV}_{j} = \vV_{j}$ in \eqref{eq:fullshiftedsolutioncompact} since $\widetilde{U} = U$.
\end{proof}
Using \eqref{approximation2x} in the integral representation gives the approximation
\begin{equation} \label{approximation2}
    f(A)\vb \approx \vV_{j} \frac{1}{2 \pi i} \int_{\Gamma} f(z)   (\vV_{j}^{*} \vW_{j}(\sigma I - \vG_{j}) + \vV^{*}_{j} \vR_{\sigma})^{-1}    \d \sigma \vV_{j}^{*} \vb,
\end{equation}
which, using a quadrature rule, gives our 
second version of rFOM (quad) which computes the approximation $\tilde{\vf}_{2} \approx f(A)\vb$ via
\[
 \widetilde{\vf}_{2} := \vV_{j}  \sum_{\ell=1}^{n_{\quadp}} \omega_\ell f(z_{\ell})   (\vV_{j}^{*}\vW_{j}(z_{\ell} I - \vG_{j}) + \vV^{*}_{j} \vR_{z_{\ell}})^{-1}\vV_{j}^{*}\vb.
\]
using some quadrature procedure with nodes $z_{i}$ and weights $\omega_{i}$.

The second version of rFOM (quad) is presented in Algorithm \ref{version2:alg}, and we note we still prefer to impose orthonormality of $U$ (also in version ~3) to avoid numerical instabilities.

\begin{algorithm}
\caption{rFOM (quad) version ~2} 
\label{version2:alg}
\begin{algorithmic}[1]
\STATE{ \textbf{Input:} $A \in \mathbb{C}^{n \times n}$,  $\vb \in \mathbb{C}^{n}$, scalar function $f$, 
  $U \in \mathbb{C}^{n \times k} $ whose columns span the augmenting recycled subspace $\mathcal{U}$,  $C = A U$, Arnoldi cycle length $j$}
\STATE{Build a basis for $\mathcal{K}_{j}(A,\vb)$ via the Arnoldi process,  generating $V_{j+1}$ and $\overline{H}_{j}$}
\STATE{$\vV_{j} \leftarrow \begin{bmatrix}
    U & V_{j} 
 \end{bmatrix}$,
 \enspace 
 $ \vW_{j} \leftarrow \begin{bmatrix}
    C & V_{j} 
 \end{bmatrix}$}
 \STATE{ Determine contour containing the spectrum of  $A$ and determine quadrature rule for the integral in \eqref{approximation2} with nodes $z_\ell$ and weights $\omega_\ell, \ell = 1,\ldots,n_{\quadp}$}
\STATE{Set $\vt = \vnull \in \mathbb{C}^{j+k}$} 
\FOR{$\ell = 1,\ldots, n_{\quadp}$}
\STATE{$\mu \leftarrow \omega_\ell f(z_\ell)$}
\STATE{$\vt \leftarrow \vt + \mu \big(\vV_{j}^{*}\vW_{j}(z_{\ell} I - \vG_{j}) + \vV^{*}_{j} \vR_{z_{\ell}}\big)^{-1} \vV^{*}_{j} \vb $}
 \ENDFOR
\STATE{ $\widetilde{\vf}_{2} \leftarrow \vV_{j} \vt $}
\STATE{Construct new orthonormal $U$, and $C$ such that $C = A U$}
\end{algorithmic}
\end{algorithm}
The attractive feature of version~2 is that for each quadrature point only one inversion of a $(j+k) \times (j+k)$ matrix is needed. A cost summary of version $2$ is given in Table \ref{tab:version2_work}. We use the same approach as for the summary in Table \ref{tab:version1_work} and  count only matrix-matrix operations. We count the cost of constructing $\vV_j^{*} \vR_{z_{\ell}}$ in terms of the cost of constructing its block components
\[
   \vV^{*}_{j} \vR_{z_{\ell}} = \begin{bmatrix} z_{\ell} U^{*}(U - C) & U^{*} M \\
   z_{\ell} V_{j}^{*}(U - C) & V_{j}^{*} M
   \end{bmatrix}
\]
where $M = -h_{j+1,j} \vv_{j+1} \ve_{j}^{T}$. Since $M$ is a rank~1 matrix, products $U^{*} M$ and $ V^{*}_{j} M$
are obtained as matrix-vector and vector-vector operations and thus do not contribute to leading order terms in the arithmetic cost.

\begin{table}
\centering
\begin{tabular}{|l|c|c|} \hline
operation & arithmetic cost & how often \\ \hline 
$\vV_{j}^{*} \vW_{j}$ & $n (j+k)^{2}$ & 1
\\
$U^{*} (U - C)$ & $n k^{2}$ & 1
\\
$V_{j}^{*} (U - C)$ & $n k j$ & 1
\\
$(\vV_{j}^{*} \vW_{j}) (z_{\ell} I - \vG_{j}) $ & $(j+k)^{3}$ & $n_{quad}$
\\
Linear solve with $(\vV_{j}^{*} \vW_{j} (z_{\ell} I - \vG_{j}) + \vV_{j}^{*} \vR_{z_{\ell}})$ & $ \frac{1}{3}(j+k)^{3}$ & $n_{quad}$
\\
\hline \hline
\multicolumn{3}{|l|}{\textbf{total cost:} $n((j+k)^{2} +  k^{2} + k j) +  \frac{4}{3} n_{quad} (j+k)^{3}$ }
\\  \hline
\end{tabular}
\caption{Arithmetic cost of rFOM (quad) version~2. \label{tab:version2_work}}
\end{table}

\subsection{rFOM (quad) version ~3}
Our third version of rFOM (quad) is based on a reformulation of (\ref{approximation2}) which  involves an evaluation of $f$  on the extended Hessenberg matrix $\vG_{j}$ from \eqref{extended_Hessenberg:eq}. 
Approximation in terms of an evaluation of $f$ on some small Hessenberg matrix is typical of how standard Krylov subspace methods for matrix functions are implemented. This is the case for both the standard Arnoldi approximation, and the deflated Arnoldi like approximation in \cite{eiermann2011deflated}. As the reader can appreciate from \Cref{prop:contour-Gj}, we cannot write the full approximation completely in terms of $f(\vG_{j})$. We can, however, write it as the sum of a term involving $f(\vG_{j})$ and another integral expression. The purpose of this formulation is to introduce a term which can be evaluated without numerical quadrature. 
This takes away much of the dependence of the approximation on the quadrature and allows us to achieve a desired level of accuracy with less quadrature points as shown in the numerical experiments in Section \ref{numericalexperiments}. We derive version~3 of rFOM (quad) using the following proposition.
\begin{proposition}\label{prop:contour-Gj}
Assume that the contour $\Gamma$ not only encloses the spectrum of $A$, but also that of $\vG_{j}$, then the approximation (\ref{approximation2}) can equivalently be written as
\begin{equation} \label{approximation3:eq} \normalfont
    f(A)\vb \approx \vV_{j} f(\vG_{j}) (\vV_{j}^{*} \vW_{j})^{-1} \vV_{j}^{*} \vb - \vV_{j} \hspace{0.01cm} \mathcal{I} \hspace{0.01cm} \vV_{j}^{*} \vb
\end{equation}
with \normalfont
\begin{eqnarray*}
    \mathcal{I} &=& \frac{1}{2 \pi i} \int_{\Gamma}  f(\sigma) (\vV^{*}_{j} \vW_{j} (\sigma I - \vG_{j})  )^{-1} 
    S_j(\sigma)\vV_{j}^{*} \vR_{\sigma}( \vV^{*}_{j} \vW_{j} (\sigma I - \vG_{j})  )^{-1} \hspace{0.1cm} \d \sigma,
\end{eqnarray*}
where \normalfont $S_j(\sigma) \; = \; (I + \vV_{j}^{*} \vR_{\sigma}( \vV^{*}_{j} \vW_{j} (\sigma I - \vG_{j})  )^{-1} )^{-1}$.
\end{proposition}

\begin{proof}
By the Sherman–Morrison–Woodbury identity we have
\begin{eqnarray*}
\lefteqn{(\vV_{j}^{*}\vW_{j}(\sigma I - \vG_{j}) + \vV^{*}_{j} \vR_{\sigma})^{-1} }  &  & \\
& = & ( \vV^{*}_{j} \vW_{j} (\sigma I - \vG_{j})  )^{-1}   - ( \vV^{*}_{j} \vW_{j}(\sigma I - \vG_{j}) )^{-1}
S_j(\sigma) \vV_{j}^{*} \vR_{\sigma}( \vV^{*}_{j} \vW_{j} (\sigma I - \vG_{j})  )^{-1}.
\end{eqnarray*}
Abbreviating $
 G_{\sigma,j} :=  \vV^{*}_{j} \vW_{j} (\sigma I - \vG_{j})$ 
and substituting into (\ref{approximation2})  yields
\begin{eqnarray*}
 f(A)\vb &\approx&  \vV_{j} \frac{1}{2 \pi i}\int_{\Gamma}  f(\sigma) (\sigma I - \vG_{j})^{-1} \d \sigma  \hspace{0.1cm}(\vV^{*}_{j} \vW_{j})^{-1} \vV_{j}^{*} \vb \\   & & \mbox{} -  \vV_{j} \frac{1}{2 \pi i} \int_{\Gamma} f(\sigma)  G_{\sigma,j}^{-1} (I + \vV_{j}^{*} \vR_{\sigma}  G_{\sigma,j}^{-1} )^{-1} \vV_{j}^{*} \vR_{\sigma}  G_{\sigma,j}^{-1} \vV^{*}_{j} \vb \hspace{0.1cm} \d \sigma,
\end{eqnarray*}
which is  the desired result.
\end{proof}
Version~3 of rFOM (quad) uses numerical quadrature for evaluation of $\mathcal{I}$ in \eqref{approximation3:eq} giving the approximation 
\begin{align*}
 \widetilde{\vf}_{3} :=  &\vV_{j} f(\vG_{j}) (\vV_{j}^{*} \vW_{j})^{-1} \vV_{j}^{*} \vb  \\
 & - \vV_{j} \sum_{\ell = 1}^{n_{\quadp}} \omega_\ell f(z_{\ell})  G_{z_{\ell},j}^{-1} (I + \vV_{j}^{*} \vR_{z_{\ell}} G_{z_{\ell},j}^{-1})^{-1} \vV_{j}^{*} \vR_{z_{\ell}} G_{z_{\ell},j }^{-1} \vV_{j}^{*} \vb.
\end{align*} 

\begin{algorithm}
\caption{rFOM (quad) version ~3 \label{version3:alg}  } 
\begin{algorithmic}[1]
\STATE{\textbf{Input:} $A \in \mathbb{C}^{n \times n}$,  $\vb \in \mathbb{C}^{n}$, scalar function $f_{s}$, matrix function $f$, 
$U \in \mathbb{C}^{n \times k} $ whose columns span the augmenting recycled subspace $\mathcal{U}$,  $C = A U$, Arnoldi cycle length $j$}
\STATE{Build a basis for $\mathcal{K}_{j}(A,\vb)$ via the Arnoldi process, generating $V_{j+1}$ and $\overline{H}_{j}$}
\STATE{ $\vV_{j} \leftarrow \begin{bmatrix}
   U & V_{j} 
 \end{bmatrix}$,
 \enspace
 $ \vW_{j} \leftarrow \begin{bmatrix}
    C & V_{j} 
 \end{bmatrix}$ }
\STATE{Determine contour containing the spectrum of  $A$ and determine quadrature rule for the integral
$\mathcal{I}$ in \eqref{approximation3:eq} with nodes $z_{\ell}$ and weights $\omega_\ell, \ell = 1,\ldots,n_{\quadp}$}
\STATE{Set $\vt = \vnull  \in \mathbb{C}^{j+k}$}
\FOR{$\ell = 1,\ldots, n_{\quadp}$}
\STATE{$\mu \leftarrow \omega_\ell f_{s}(z_{\ell})$}
\STATE{$\vt \leftarrow \vt + \mu \hspace{0.1cm} G_{z,j}^{-1} (I + \vV_{j}^{*} \vR_{z_{\ell}} G_{z,j}^{-1})^{-1} \vV_{j}^{*} \vR_{z_{\ell}} G_{z,j}^{-1} \vV_{j}^{*}\vb$}
\ENDFOR
\STATE{$\widetilde{\vf}_{3} \leftarrow  \vV_{j} f(\vG_{j}) (\vV^{*}_{j} \vW_{j})^{-1} \vV^{*}_{j} \vb - \vV_{j} \vt$ \label{line:mat_eval}}
\STATE{Construct new orthonormal $U$, and $C$ such that $C = A U$}
\end{algorithmic}
\end{algorithm}

An algorithmic description of rFOM (quad), version ~3, is given as Algorithm \ref{version3:alg} and an arithmetic cost summary is given in Table \ref{tab:version3_work}. The cost of evaluating $f(\vG_j)$ will depend on whether we can use specifically tailored methods for specific functions $f$. Hence, we carry a term of $\mathcal{O}((j+k)^3)$ for that cost. The cost is proportional to $(j+k)^3$ if instead of a specific method we just use the eigendecomposition of $\vG_j$. The quantity $\vR_{z_\ell}$ is assumed to be computed as in version~2.

\begin{table}
\centering
\begin{tabular}{|l|c|c|} \hline
operation & arithmetic cost & how often \\ \hline 
$\vV_{j}^{*} \vW_{j}$ & $n (j+k)^{2}$ & 1
\\
Linear solve with $\vV_{j}^{*} \vW_{j}$ & $\frac{1}{3} (j + k)^{3}$ & 1
\\
$f(\vG_{j})$ & $\mathcal{O}( (j+k)^{3})$ & 1
\\
$U^{*} (U - C)$ & $n k^{2}$ & 1
\\
$V_{j}^{*} (U - C)$ & $n j k$ & 1
\\
$\vV_{j}^{*} \vW_{j} (z I - \vG_{j}) := G_{z,j}$ & $(j+k)^{3}$ & $n_{quad} $
\\
$G_{z,j}^{-1} $ & $(j+k)^{3}$ & $n_{quad}$
\\
$\vV_{j}^{*} \vR_{z_{\ell}} G_{z,j}^{-1}$ & $(j+k)^{3}$ & $n_{quad}$
\\
Linear solve with $ (I + \vV_{j}^{*} \vR_{z_{\ell}} G^{-1}_{z,j} ) $ & $\frac{1}{3}(j+k)^{3}$ & $n_{quad}$
\\
\hline \hline
\multicolumn{3}{|l|}{\textbf{total cost: $n ((j+k)^{2} +  k^{2} + jk) + n_{quad} \frac{10}{3} (j+k)^{3} + \mathcal{O}((j+k)^3)$}}
\\  \hline
\end{tabular}
\caption{Arithmetic cost of rFOM (quad) version~3. \label{tab:version3_work}}
\end{table}

We summarize the cost analysis in this section as follows: aside from the cost of the Arnoldi process, the costs of all three versions have leading terms which are $n$ times a sum of quadratic terms in $k$ and $j$ plus $n_\quadp$ times a sum of cubic terms in $k$ and $j$. 
In this sense all versions have comparable cost, although the constants involved in the quadratic and cubic terms differ from one version to the other.
For all versions, the $n_{quad}$ term in the cost is not dependent on the dominant $n$ and thus one can increase the number of quadrature points without causing large growths in cost.

\section{Extension to Stieltjes functions}\label{sec:stieltjes}
A Stieltjes function $f$ is a function which admits an integral representation
\[
f(z) = \int_{-\infty}^0 \frac{g(\sigma)}{\sigma-z} \d \sigma, \enspace z \in \mathbb{C} \setminus (-\infty,0]
\]
where $g$ is a function which has constant sign in $[0,\infty).$ Important Stieltjes functions are the inverse powers $f(z) = z^{-\alpha}$ with $\alpha \in (0,1)$, as well as other examples such as $f(z) = \frac{\mbox{log}(1-z)}{z}$: see \cite{frommer2014efficient, Henrici77}. 

The integral representation of a Stieltjes matrix function
\[
f(A)  = \int_{-\infty}^0 g(\sigma)(\sigma I-A)^{-1} \d\sigma
\]
resembles the Cauchy integral representation \eqref{integral_representation:eq} in that it also relies on the resolvent $(\sigma I-A)^{-1}$, the difference being that instead of a contour $\Gamma$ we now have a fixed integration interval $(-\infty,0]$. Thus we can also use all three versions of
rFOM (quad) on Stieltjes functions with the obvious change in the domain of integration.
Typical quadrature rules for the interval $(-\infty,0]$ first map the infinite interval to a finite interval, $[-1,1)$, say, and then use (variants of) Gaussian quadrature; see \cite{frommer2014efficient}, e.g.

\section{Updating the recycled subspace}\label{harmritz}
A practical implementation of rFOM (quad) requires an appropriate choice of the augmentation/recycled subspace $\mathcal{U}$. For most functions of practical interest (e.g. log, sign, inverse) the singularity is at the origin. We thus restrict our attention to these types of functions and aim to recycle $k$ approximate eigenvectors corresponding to the $k$ smallest eigenvalues of the previous matrix in the sequence. We shortly present how to obtain the Ritz approximations.



\begin{proposition} If the basis $U$ of $\mathcal{U}$ is orthonormal, then the Ritz eigenpairs $(\vy,\mu)$ of the matrix $A$ with respect to the augmented Krylov subspace  $\mathcal{K}_{j}(A, \vb) +  \mathcal{U}$, defined by the conditions
    \begin{equation}\label{eq:harmritz_cond}
    \vy \in \mathcal{K}_{j}(A, \vb) +  \mathcal{U} \text { and } A\vy - \mu \vy \perp \mathcal{K}_{j}(A, \vb) +  \mathcal{U},
    \end{equation}
    are given by $(\vV_j \vg,\mu)$ where $(\vg,\mu)$ is a solution of the generalized eigenproblem \normalfont
     \begin{equation}\label{eq:rits_problem_for_fab}
   \mu \begin{bmatrix}
       I & U^{*} V_{j} \\ V_{j}^{*} U & I
   \end{bmatrix} \vg = \begin{bmatrix}
       U^{*} C & U^{*} V_{j+1}\overline{H}_{j} \\
       V_{j}^{*}C & H_{j}
   \end{bmatrix} \vg
\end{equation}
in $\mathbb{C}^{k+j}$.
\end{proposition}

\begin{proof}
Clearly, since $\vy \in \mathcal{K}_{j}(A,\vb) + \mathcal{U}$, we can write $\vy = \vV_{j} \vg$ for some $\vg \in \mathbb{C}^{j+k}$.
The eigenproblem (\ref{eq:rits_problem_for_fab}) arises from application of the orthogonality condition in \eqref{eq:harmritz_cond} yielding
\[
\vV_j^*(A \vV_j \vg - \mu \vV_j \vg) = 0,
\]
and  using $U^*U = I$, $ V_j^* V_j = I$, $AU = C$ and the Arnoldi relation \eqref{Arnoldi}.
\end{proof}

After we compute a set of $k$ Ritz vectors $\vy_{i}, i = 1,2, \ldots, k$ we can then store these vectors as the columns of the matrix $U$
and construct $C$ accordingly via $k$ matrix-vector products with the matrix $A$ for the next problem in the sequence.

\section{Numerical experiments}\label{numericalexperiments}
In this section we present results of numerical experiments. We demonstrate the effectiveness of rFOM (quad) as a recycling method for solving a sequence of problems using a comparison to the standard FOM  approximation. We drop the  ``(quad)''  in the name on the plot legends for rFOM (quad). All experiments have been performed in MATLAB, and the code \footnote{Code available at \url{https://github.com/burkel8/MatrixFunctionRecycling }} uses built-in MATLAB functions for the direct evaluation of matrix functions in each of the approximations. These MATLAB functions are also used to obtain the ``exact'' solution for each experiment which we used to determine the relative error
\[
\frac{\|f(A^{(i)})\vb^{(i)} - \vf_m^{(i)}\|}{\|f(A^{(i)})\vb^{(i)}\|},
\]
where $\vf_{m}^{(i)}$ is the rFOM (quad) approximation for $f(A^{(i)}) \vb^{(i)}$.

Our experiments are performed using a QCD matrix $Q_{\mu}$ of size $3,072 \times 3,072$ constructed from an initial $4^4$ configuration, as above, with $\kappa_c = 0.137$ and chemical potential $\mu = 0.3 $. 



Since the sign function is typically computed using the relation $\sign(Q_{\mu}) = (Q_{\mu}^{2})^{-1/2} Q_{\mu}$, our experiments actually report the results for the inverse square root function for $Q_\mu^2$, which can be represented as a Stieltjes integral 
\[
z^{-1/2} = \frac{-1}{\pi} \int_{-\infty}^{0} \frac{\sigma^{-1/2}}{\sigma-z} \d \sigma,
\]
for which we use a variant of Gaussian quadrature as detailed in \cite{frommer2014efficient}.

In the first experiment, we begin with the matrix $Q_{\mu}$ (as above) and a randomly generated vector $\vb$, and compute the inverse square root of $Q_{\mu}^{2}$ applied to $Q_{\mu}\vb$ using all three implementations of rFOM (quad). The augmentation subspace used is constructed from high quality eigenvector approximations computed directly from the matrix $Q_{\mu}^{2}$. We keep the number of Arnoldi iterations fixed at $m = 50$, the recycling subspace dimension fixed at $k = 20$. The Arnoldi method thus extracts an approximation from a subspace of dimension $m$ while the recycling methods use a subspace of dimension $m + k$. We vary the number of quadrature nodes used to compute the approximation. In Figure \ref{fig:quad_test} we plot the relative error in rFOM (quad) obtained after the full $m$ Arnoldi iterations, and compare this error to that obtained from the standard closed-form Arnoldi approximation (FOM) \eqref{Arnoldi_Cauchy:eq}, as well as a quadrature-based implementation (FOM quad) of the standard Arnoldi approximation.
\begin{figure}
    \centering
    \includegraphics[trim=0.5cm 9cm 0.5cm 9cm, width= .8\textwidth]{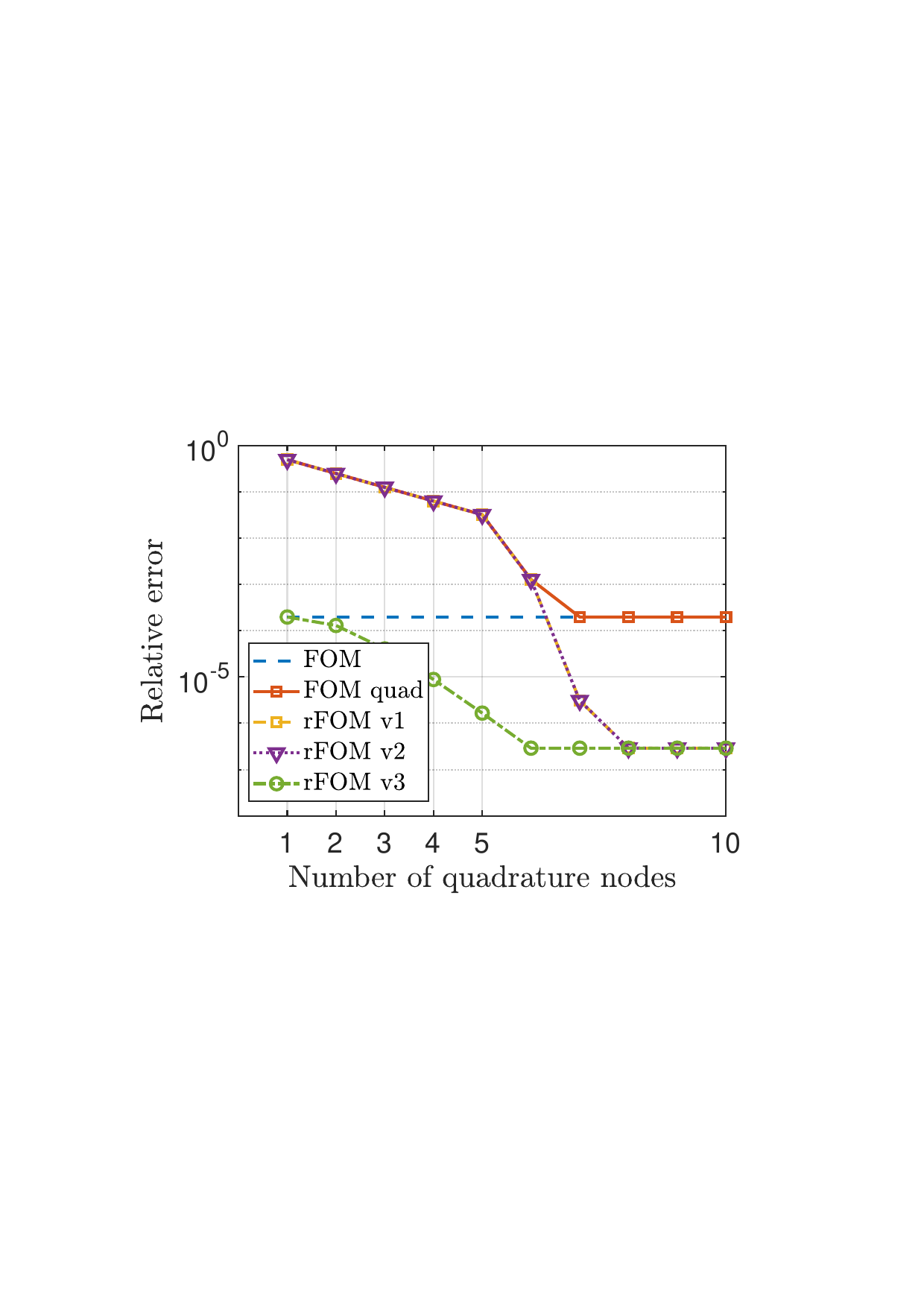}
    \caption{inverse square root function: relative error of rFOM (quad) for fixed $m = 50$ and $k = 20$ versus the number of quadrature nodes used.}
    \label{fig:quad_test}
\end{figure}

As expected, we need a sufficient number of quadrature nodes, particularly for versions $1$ and $2$ in order for rFOM (quad) to be at least as accurate as the standard FOM approximation. Note that versions 1 and 2 behave very similarly which is why the curve for version 1 is hidden behind that  of version 2 in the figure.  Beyond a certain number of quadrature points, the error of the quadrature Arnoldi stagnates and being identical to the error of standard Arnoldi, indicating that the quadrature error has become negligible as compared to the subspace approximation error. For the augmented methods, we see more  quadrature points are needed before such a stagnation occurs, and we note that version $3$ requires the least amount of quadrature points for this stagnation to occur. This is because this approximation involves a term which does not depend on quadrature but a ``closed form'' evaluation of the function.  Since versions ~1 and ~2 of rFOM (quad) are based entirely on quadrature, both approximations require a larger number of quadrature points to achieve the same level of accuracy as version ~3. At stagnation and beyond, the rFOM (quad) approximations  significantly outperform the standard Arnoldi approximation and all three implementations produce the same approximation. 

We now repeat a similar experiment using a sequence of problems with the same matrix $Q_{\mu}$, but different independent random vectors $\vb^{(i)}$. We run four experiments with different values of $m$, and a fixed recycling subspace dimension of $k=30$, and plot the relative error obtained after $m$ iterations of each method, for all problems in the sequence.
For all our experiments we make no assumption on the relationship between consecutive vectors $\vb^{(i)}$ and generate an independent random vector for each new problem in the sequence. The subspace $\mathcal{U}$ has been computed between each system using the Ritz procedure outlined in Section~\ref{harmritz}, and enough quadrature points have been chosen to ensure that the standard Arnoldi and the quadrature based Arnoldi produce the same approximation. Thus we only include the standard Arnoldi approximation (FOM) in the results in Figure \ref{fig:rfomquad_fixedrhs}. 
 
\begin{figure}
\centering
  \begin{minipage}{\linewidth}
    \makebox[.5\linewidth]{\includegraphics[trim=0.5cm 9cm 0.5cm 9cm, width= .7\textwidth]{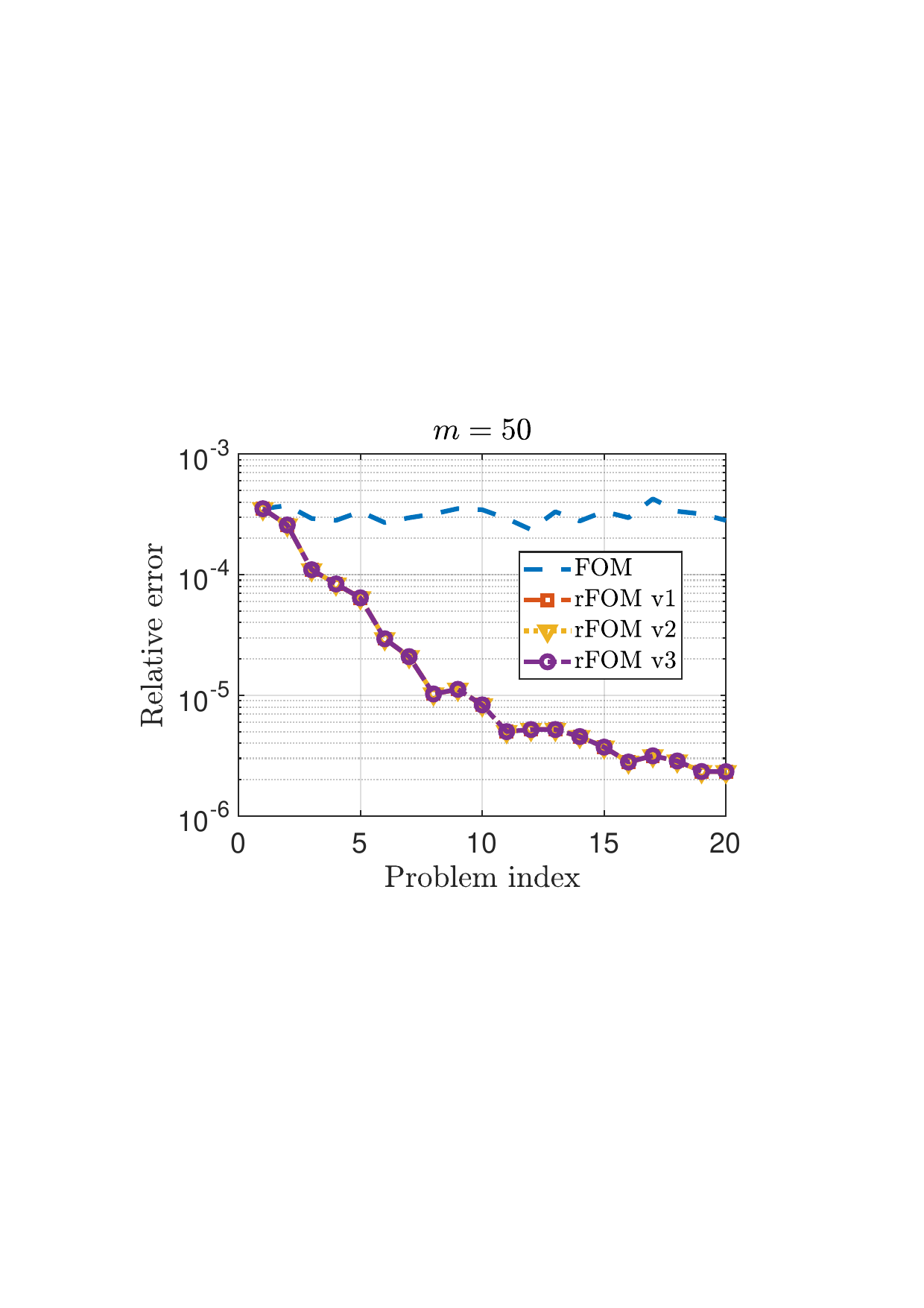}}%
    \makebox[.5\linewidth]{\includegraphics[trim=0.5cm 9cm 0.5cm 9cm, width= .7\textwidth]{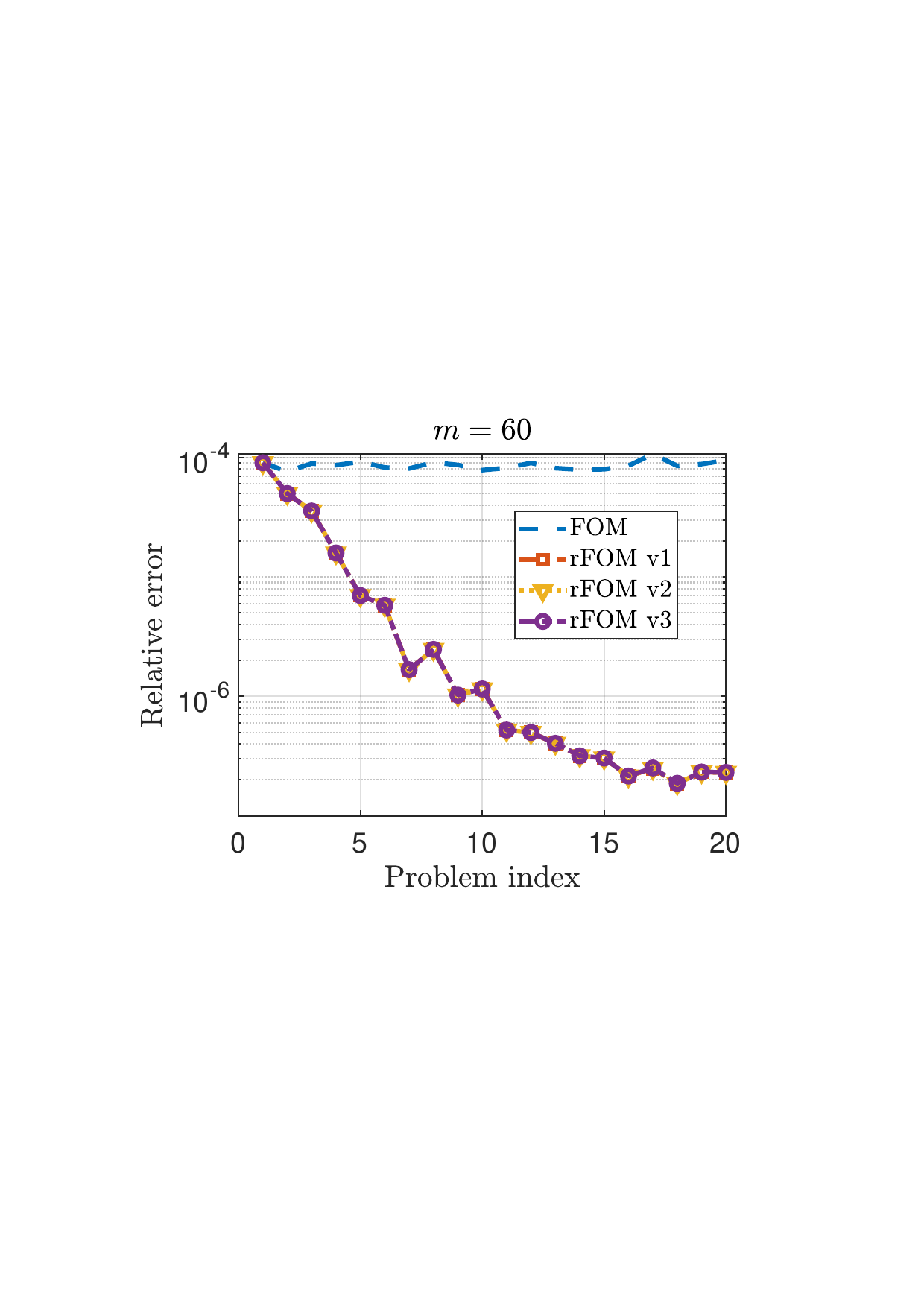}}

    \makebox[.5\linewidth]{\small (a)}%
    \makebox[.5\linewidth]{\small (b)}%

    \medskip

    \makebox[.5\linewidth]{\includegraphics[trim=0.5cm 9cm 0.5cm 9cm, width= .7\textwidth]{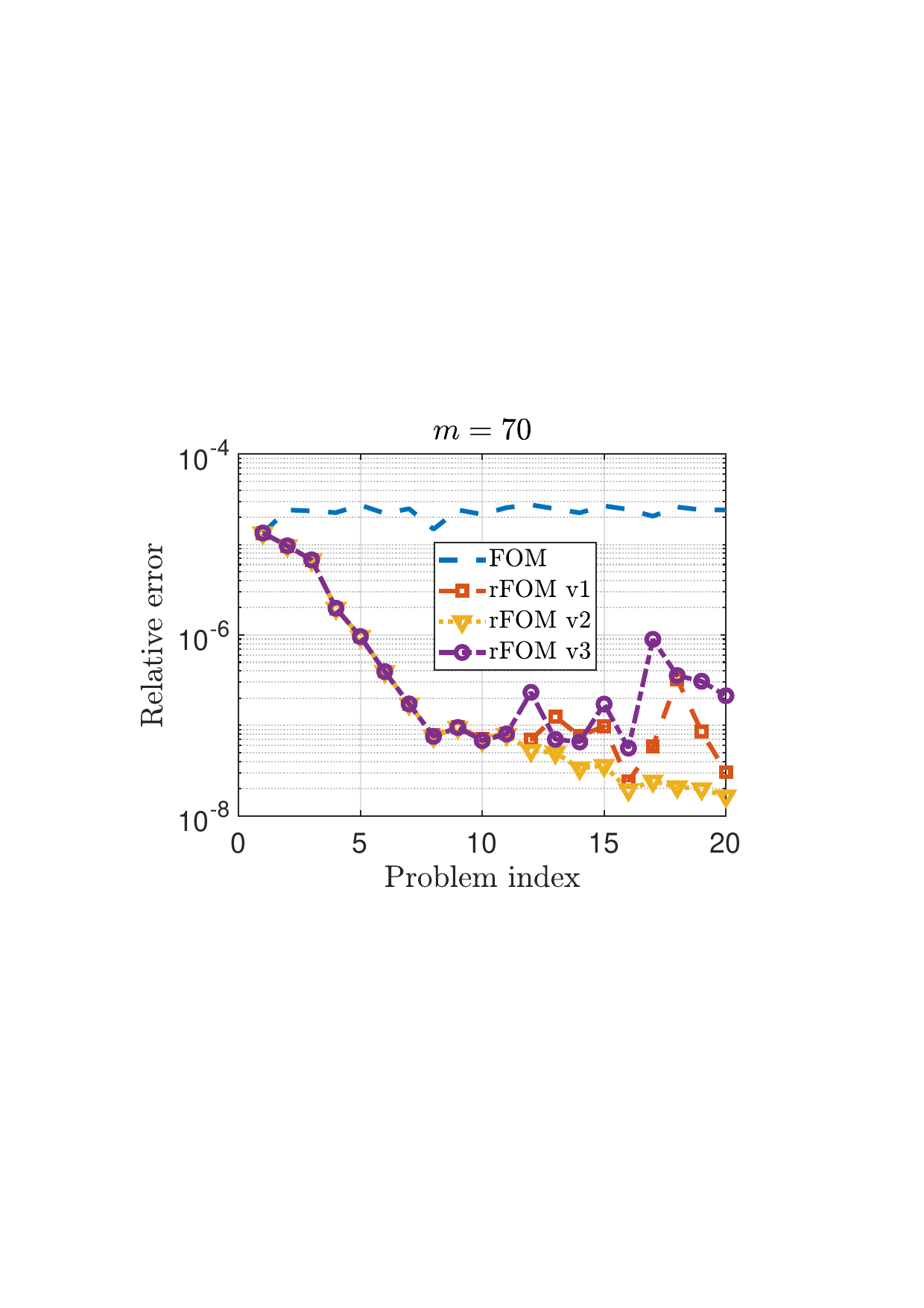}}%
    \makebox[.5\linewidth]{\includegraphics[trim=0.5cm 9cm 0.5cm 9cm, width= .7\textwidth]{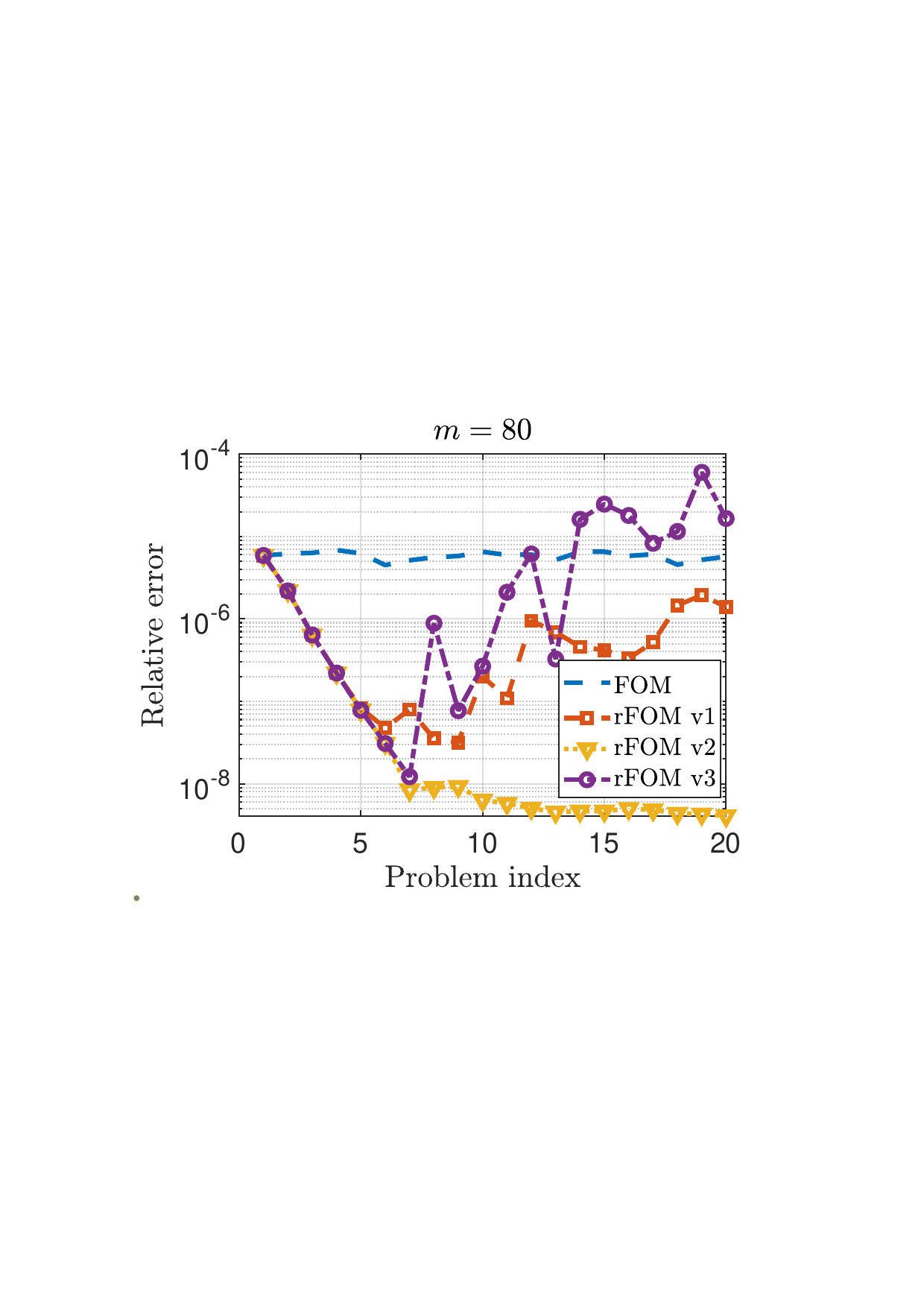}}

    \makebox[.5\linewidth]{\small (c)}%
    \makebox[.5\linewidth]{\small (d)}%
  \end{minipage}%
  \caption{inverse square root function: relative error for $20$ problems obtained at the end of $m$ Arnoldi iterations of rFOM (quad) for fixed $k = 30$. The plot shows the instability of rFOM (quad) for versions $1$ and $3$ for increasing values of $m$.}
  \label{fig:rfomquad_fixedrhs}
\end{figure}

From these experiments we see that for small enough $m$ all three variants of rFOM (quad) produce identical convergence curves. We see significant reductions in the relative error of rFOM (quad) for $m=50$ and $m=60$ as compared to the standard FOM approximation without recycling, and this error is consistently reduced as the sequence of problems progresses. This is one of the key qualities of an effective recycling method as outlined in \cite{parks2006recycling}.
However we note in subfigures (c) and (d) that versions $1$ and $3$ of rFOM (quad) suffer from stability issues for larger cycle lengths $m$, and so we believe version $2$ is the most robust implementation. From the remaining experiments we thus consider only version $2$ of rFOM (quad).

In the final QCD experiment we consider the relative error curves obtained from solving a sequence of QCD problems with slowly changing matrices from four different sequences. Each sequence begins with the QCD matrix $Q_{\mu}$, and each matrix in the sequence is then constructed from the previous via random perturbations. In particular, each gauge link $U_\nu(\ell)$ of the previous configuration in the sequence is randomly perturbed to give a new SU(3)-matrix. The strength of the perturbation is controlled by a parameter $\delta \in [0,1]$, where smaller values of $\delta$ give rise to smaller perturbations, and $\delta = 0$ corresponds to no perturbation at all. In this experiment, each sequence differs by the value of $\delta$ used, and hence represents a different level of ``closeness'' between each of the problems. We take a fixed Arnoldi cycle length of $m = 50$ and a fixed recycling subspace dimension of $k=20$, and plot the relative error for each problem in the sequence. The results are shown in Figure \ref{fig:rfomquad_matchange}, and, as expected, we see that rFOM (quad) is most effective when the matrices in the sequence are close.

\begin{figure}
\centering
  \begin{minipage}{\linewidth}
    \makebox[.5\linewidth]{\includegraphics[trim=0.5cm 9cm 0.5cm 9cm, width= .7\textwidth]{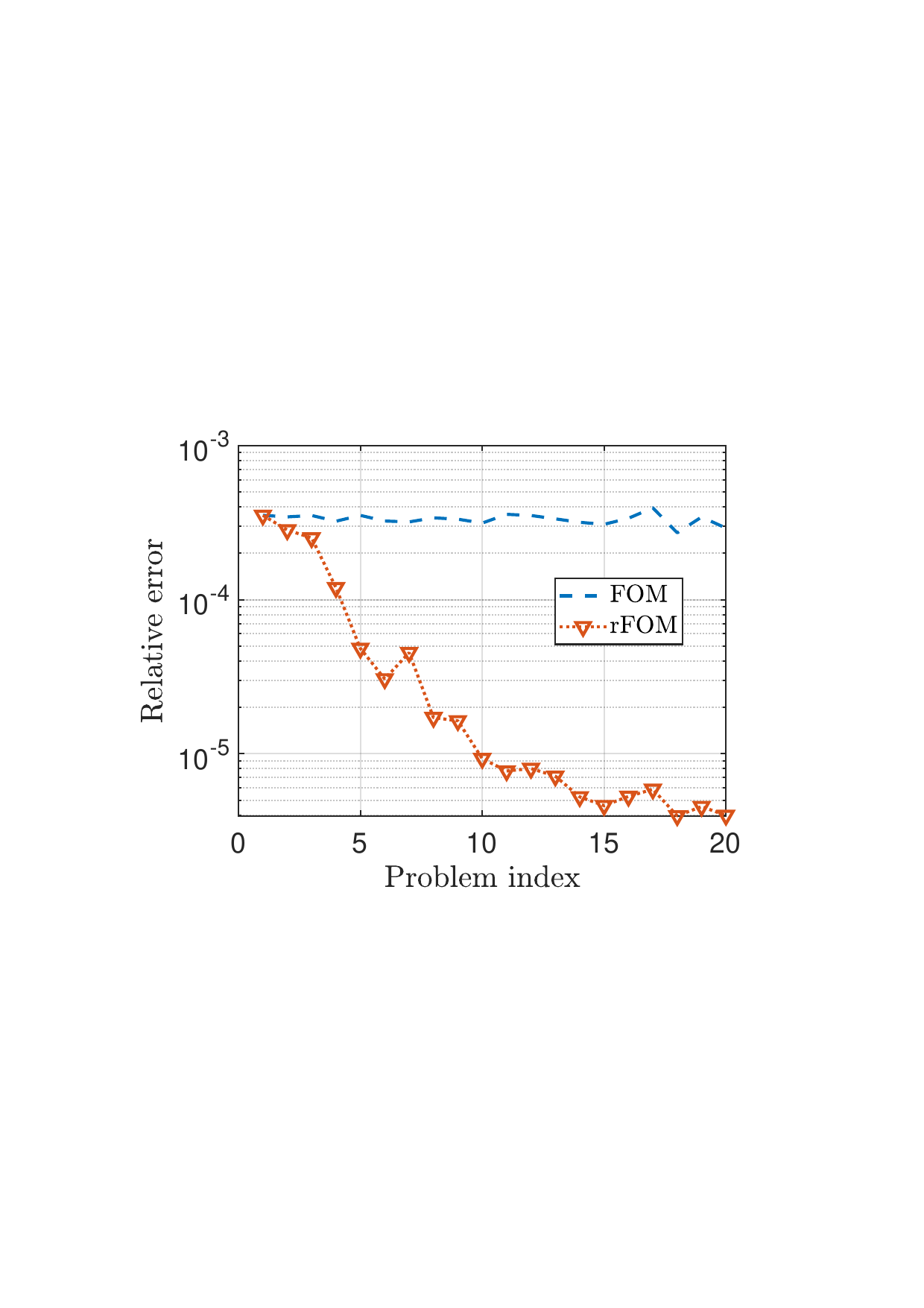}}%
    \makebox[.5\linewidth]{\includegraphics[trim=0.5cm 9cm 0.5cm 9cm, width= .7\textwidth]{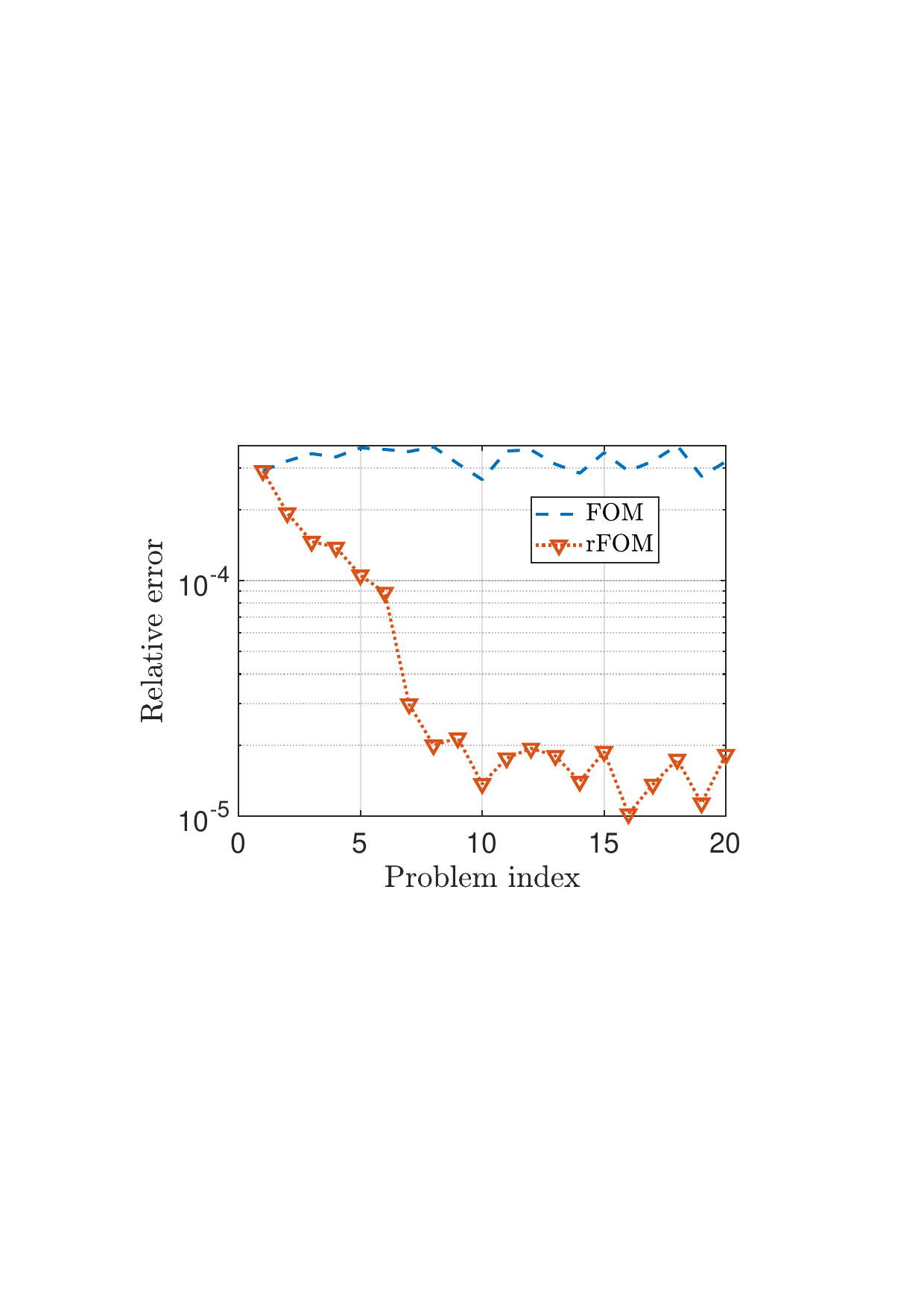}}

    \makebox[.5\linewidth]{\small (a)}%
    \makebox[.5\linewidth]{\small (b)}%

    \medskip

    \makebox[.5\linewidth]{\includegraphics[trim=0.5cm 9cm 0.5cm 9cm, width= .7\textwidth]{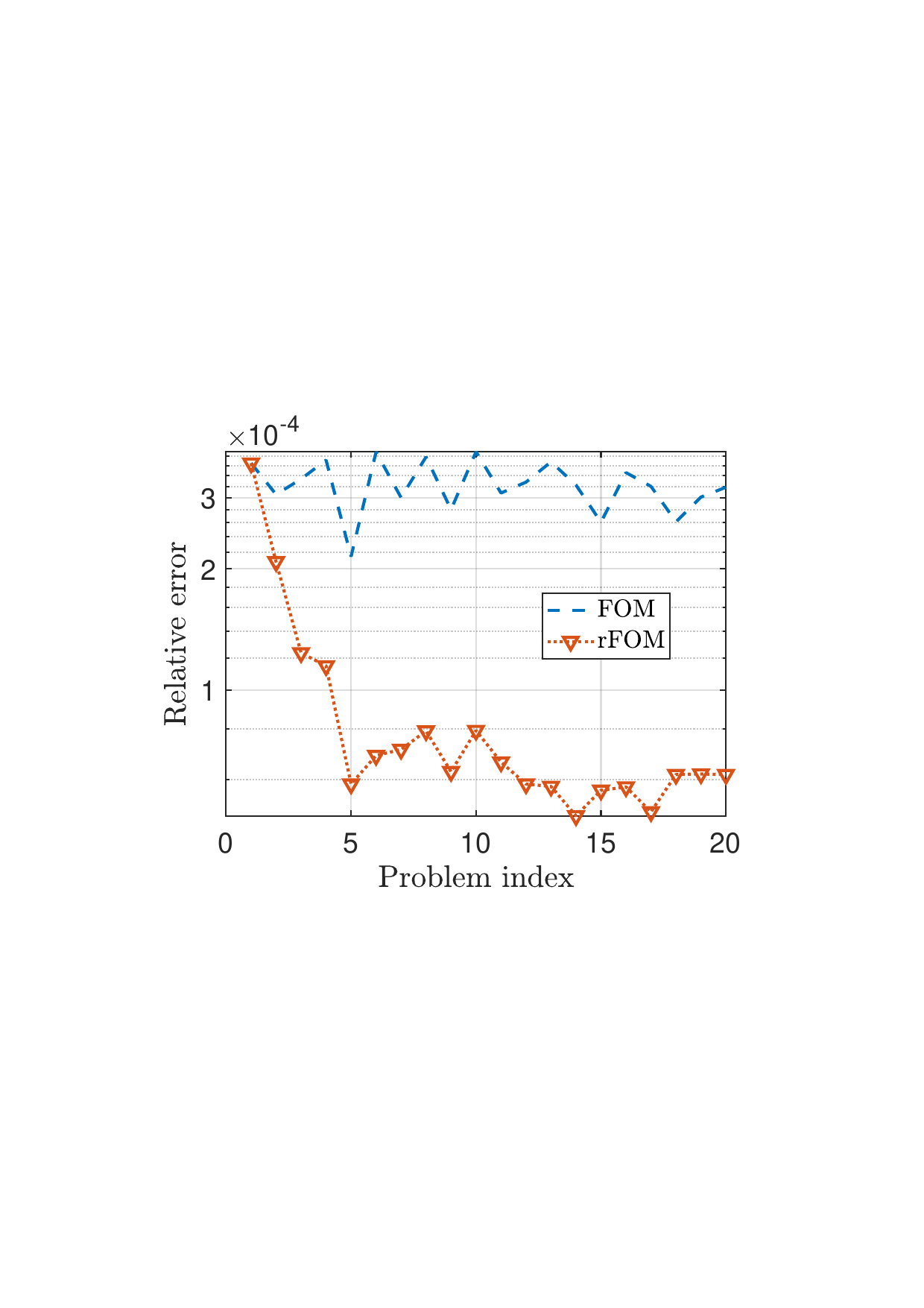}}%
    \makebox[.5\linewidth]{\includegraphics[trim=0.5cm 9cm 0.5cm 9cm, width= .7\textwidth]{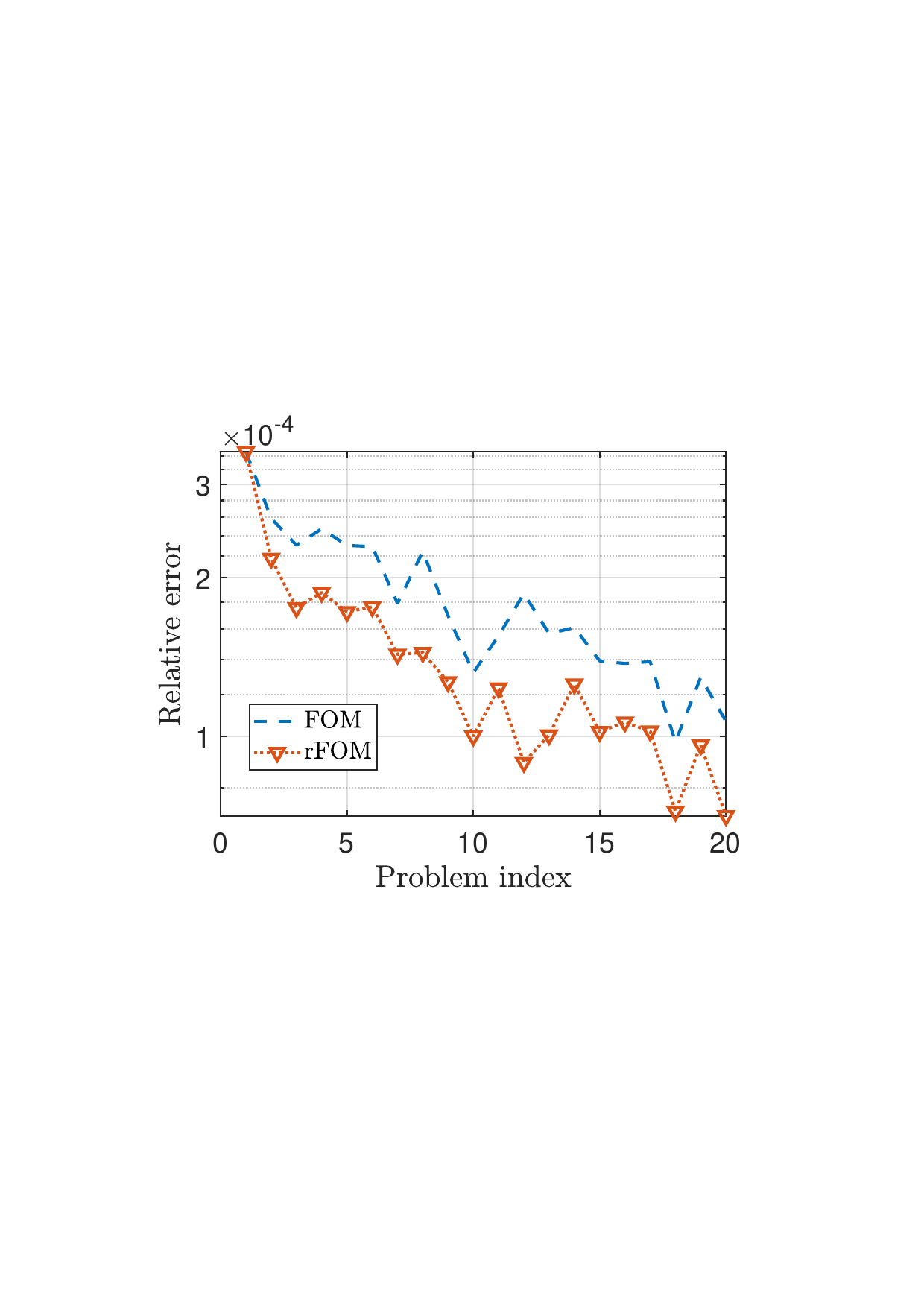}}

    \makebox[.5\linewidth]{\small (c)}%
    \makebox[.5\linewidth]{\small (d)}%
  \end{minipage}%
  \caption{inverse square root function: relative error for $20$ problems for different sequences with a fixed Arnoldi cycle length of $m = 50$ and recycling dimension $k = 20$. Each plot shows the relative error obtained for each problem in the sequence. In (a) $\delta = 0.0001$, (b) $\delta = 0.001$, (c) $\delta = 0.01$, (d) $\delta = 0.1$.}
   \label{fig:rfomquad_matchange}
\end{figure}

To demonstrate the generality of our approach, we end this section with a rather academic example using a different function and matrix. We consider $20$ applications of the log function on a matrix of size $1,600 \times 1,600$, representing a discretization of the 2d Laplace operator on a $40\times 40$ grid (obtained from the MATLAB matrix gallery), to a sequence of randomly generated vectors. We take $m = 50$ Arnoldi iterations with a recycling subspace dimension of $k = 20$ and plot the final error obtained for each problem. The result is shown in Figure \ref{fig:log_test}, where we observe a behaviour similar to the QCD inverse square root example. 

\begin{figure}[h]
    \centering
    \includegraphics[trim=0.5cm 9cm 0.5cm 9cm, width= .8\textwidth]{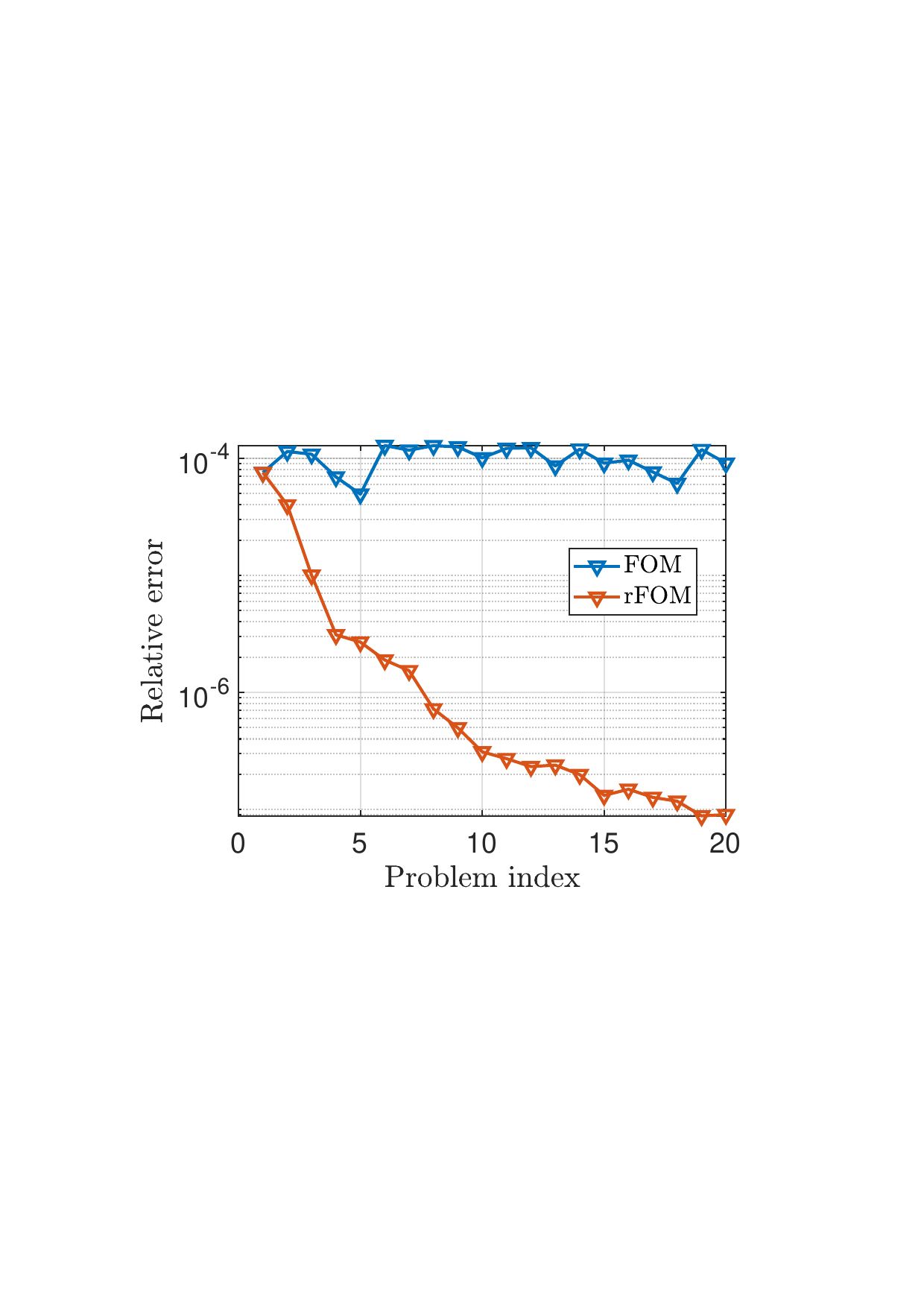}
    \caption{log function: relative error for $20$ problems with a fixed Arnoldi cycle length of $m = 50$ and recycling dimension $k = 20$.}
    \label{fig:log_test}
\end{figure}

\section{Conclusions and future work}\label{conclusions}
In this paper, we present the first augmented Krylov subspace method for the approximation of the action of a matrix function which allows for the recycling of \emph{arbitrary subspaces} $\mathcal{U}$ between a sequence of slowly changing matrices and vectors $f(A^{(i)})\vb^{(i)}$. We  presented a framework for understanding the development of such methods, but focused our attention on FOM-based recycling strategies. We presented three mathematically equivalent implementations of our approach with the same asymptotic costs, and we demonstrated that version~2 of the new method appears to be the most stable.  

In subsequent work for which the publication process is already completed \cite{burke2024krylov}, we have continued to build on the ideas and techniques presented in this paper in order to develop improved recycling algorithms for matrix functions, and address the stability issues associated to rFOM (quad) versions $1$ and $3$.

\section*{Acknowledgments}
We thank the anonymous reviewer for helpful suggestions which have improved the manuscript.
LB would like to thank the Hamilton Scholars and the Irish Research Council Government of Ireland postgraduate scholarship for funding this work, and Kathryn Lund for useful conversations. We thank Roman Höllwieser from University of Wuppertal for providing us the configurations used in this work. This work is partially supported by the German Research Foundation (DFG) research unit FOR5269 ``Future methods for studying confined gluons in QCD".

\bibliographystyle{unsrt}
\bibliography{references}

\begin{thebibliography}{10}

\bibitem{hochbruck1997krylov}
Marlis Hochbruck and Christian Lubich.
\newblock {O}n {K}rylov subspace approximations to the matrix exponential
  operator.
\newblock {\em {SIAM} {J}. {N}umer. {A}nal.}, 34(5), 1997.

\bibitem{hochbruck2010exponential}
Marlis Hochbruck and Alexander Ostermann.
\newblock {E}xponential {I}ntegrators.
\newblock {\em {A}cta {N}umer.}, 19:209--286, 2010.

\bibitem{singer1976representation}
Burton Singer and Seymour Spilerman.
\newblock {T}he representation of social processes by {M}arkov models.
\newblock {\em Am. J. Sociol.}, 82(1):1--54, 1976.

\bibitem{knechtli2017lattice}
Francesco Knechtli, Michael G{\"u}nther, and Michael Peardon.
\newblock {\em {L}attice {Q}uantum {C}hromodynamics: {P}ractical {E}ssentials}.
\newblock Springer, 2017.

\bibitem{frommer2000numerical}
Andreas Frommer, Thomas Lippert, Bj{\"o}rn Medeke, and Klaus Schilling.
\newblock {\em {N}umerical Challenges in {L}attice {Q}uantum {C}hromodynamics:
  {J}oint Interdisciplinary Workshop of John Von Neumann Institute for
  Computing, J{\"u}lich, and Institute of Applied Computer Science, Wuppertal
  University, August 1999}, volume~15.
\newblock Springer Science \& Business Media, 2000.

\bibitem{bloch2007iterative}
Jacques Bloch, Andreas Frommer, Bruno Lang, and Tilo Wettig.
\newblock {A}n iterative method to compute the sign function of a
  non-{H}ermitian matrix and its application to the overlap {D}irac operator at
  nonzero chemical potential.
\newblock {\em {C}omput. {P}hys. {C}ommun.}, 177(12):933--943, 2007.

\bibitem{eiermann2011deflated}
Michael Eiermann, Oliver~G Ernst, and Stefan G{\"u}ttel.
\newblock {D}eflated restarting for matrix functions.
\newblock {\em {SIAM} {J}. {M}atrix {A}nal. {A}ppl.}, 32(2):621--641, 2011.

\bibitem{parks2006recycling}
Michael~L Parks, Eric De~Sturler, Greg Mackey, Duane~D Johnson, and Spandan
  Maiti.
\newblock {R}ecycling {K}rylov subspaces for sequences of linear systems.
\newblock {\em {J}. {S}ci. {C}omput.}, 28(5):1651--1674, 2006.

\bibitem{eiermann2006restarted}
Michael Eiermann and Oliver~G Ernst.
\newblock {A} restarted {K}rylov subspace method for the evaluation of matrix
  functions.
\newblock {\em {SIAM} {J}. {N}umer. {A}nal.}, 44(6):2481--2504, 2006.

\bibitem{frommer2014efficient}
Andreas Frommer, Stefan Güttel, and Marcel Schweitzer.
\newblock {E}fficient and stable {A}rnoldi restarts for matrix functions based
  on quadrature.
\newblock {\em {SIAM} {J}. {M}atrix {A}nal. {A}ppl.}, 35(2):661--683, 2014.

\bibitem{FrGuSch14}
Andreas Frommer, Stefan G\"uttel, and Marcel Schweitzer.
\newblock {C}onvergence of restarted {K}rylov subspace methods for {S}tieltjes
  functions of matrices.
\newblock {\em {SIAM} {J}. {M}atrix {A}nal. {A}ppl.}, 35(4):1602--1624, 2014.

\bibitem{frommer2017block}
Andreas Frommer, Kathryn Lund, and Daniel~B Szyld.
\newblock {B}lock {K}rylov subspace methods for functions of matrices.
\newblock {\em {E}lectron. {T}rans. {N}umer. {A}nal.}, 47:100--126, 2017.

\bibitem{frommer2020block}
Andreas Frommer, Kathryn Lund, and Daniel~B Szyld.
\newblock {B}lock {K}rylov subspace methods for functions of matrices ii:
  {M}odified block {FOM}.
\newblock {\em {SIAM} {J}. {M}atrix {A}nal. {A}ppl.}, 41(2):804--837, 2020.

\bibitem{morgan2002gmres}
Ronald~B Morgan.
\newblock {GMRES} with deflated restarting.
\newblock {\em SIAM J. Sci. Comput.}, 24(1):20--37, 2002.

\bibitem{de1996nested}
Eric de~Sturler.
\newblock {N}ested {K}rylov methods based on {GCR}.
\newblock {\em J. Comput. Appl. Math.}, 67(1):15--41, 1996.

\bibitem{Higham2008}
Nicholas~J. Higham.
\newblock {\em {F}unctions of {M}atrices}.
\newblock SIAM, Philadelphia, 2008.

\bibitem{saad2003iterative}
Yousef Saad.
\newblock {\em {I}terative methods for sparse linear systems}.
\newblock SIAM, 2003.

\bibitem{simoncini2003restarted}
Valeria Simoncini.
\newblock {R}estarted full orthogonalization method for shifted linear systems.
\newblock {\em BIT Numer. Math.}, 43(2):459--466, 2003.

\bibitem{joo2019status}
B{\'a}lint Jo{\'o}, Chulwoo Jung, Norman~H Christ, William Detmold, Robert~G
  Edwards, Martin Savage, and Phiala Shanahan.
\newblock {S}tatus and future perspectives for lattice gauge theory
  calculations to the exascale and beyond.
\newblock {\em EPJ A}, 55(11):1--26, 2019.

\bibitem{gattringer2009quantum}
Christof Gattringer and Christian Lang.
\newblock {\em Quantum chromodynamics on the lattice: an introductory
  presentation}, volume 788.
\newblock Springer Science \& Business Media, 2009.

\bibitem{hernandez1999locality}
Pilar Hernandez, Karl Jansen, and Martin L{\"u}scher.
\newblock Locality properties of {N}euberger's lattice {D}irac operator.
\newblock {\em Nucl.\ Phys.\ B}, 552(1-2):363--378, 1999.

\bibitem{Neuberger1998}
Herbert Neuberger.
\newblock Exactly massless quarks on the lattice.
\newblock {\em Phys.\ Lett.\ B}, 417(1--2):141--144, 1998.

\bibitem{cundy2009numerical}
Nigel Cundy, Stefan Krieg, Guido Arnold, Andreas Frommer, Th~Lippert, and Klaus
  Schilling.
\newblock {N}umerical methods for the {QCD} overlap operator {IV}: {H}ybrid
  {M}onte {C}arlo.
\newblock {\em {C}omput. {P}hys. {C}ommun.}, 180(1):26--54, 2009.

\bibitem{arnold2005numerical}
Guido Arnold, Nigel Cundy, Jasper van~den Eshof, Andreas Frommer, Stefan Krieg,
  Thomas Lippert, and Katrin Sch{\"a}fer.
\newblock {N}umerical methods for the {QCD} overlap operator {II}. {O}ptimal
  {K}rylov subspace methods.
\newblock In {\em {QCD} and {N}umerical {A}nalysis III}, pages 153--167.
  Springer, 2005.

\bibitem{cundy2005numerical}
Nigel Cundy, Jasper van~den Eshof, Andreas Frommer, Stefan Krieg, Th~Lippert,
  and Katrin Sch{\"a}fer.
\newblock {N}umerical methods for the {QCD} overlap operator {III}. {N}ested
  iterations.
\newblock {\em {C}omput. {P}hys. {C}ommun.}, 165(3):221--242, 2005.

\bibitem{van2002numerical}
Jasper van~den Eshof, Andreas Frommer, Th~Lippert, Klaus Schilling, and Henk~A
  van~der Vorst.
\newblock {N}umerical methods for the {QCD}d overlap operator. i.
  {S}ign-function and error bounds.
\newblock {\em {C}omput. {P}hys. {C}ommun.}, 146(2):203--224, 2002.

\bibitem{BraFroKaLeRoStr2015}
James Brannick, Andreas Frommer, Karsten Kahl, Bj\"{o}rn Leder, Matthias
  Rottmann, and Artur Strebel.
\newblock {M}ultigrid preconditioning for the overlap operator in {L}attice
  {QCD}.
\newblock {\em Numer.\ Math.}, 132:463--490, 2016.

\bibitem{gaul2014recycling}
Andr{\'e} Gaul.
\newblock {\em {R}ecycling {K}rylov subspace methods for sequences of linear
  systems}.
\newblock PhD thesis, Technische Universität Berlin, Fakultät II - Mathematik
  und Naturwissenschaften, 2014.

\bibitem{gaul2013framework}
Andr{\'e} Gaul, Martin~H Gutknecht, Jorg Liesen, and Reinhard Nabben.
\newblock {A} framework for deflated and augmented {K}rylov subspace methods.
\newblock {\em {SIAM} {J}. {M}atrix {A}nal. {A}ppl.}, 34(2):495--518, 2013.

\bibitem{gutknecht2012spectral}
Martin~H Gutknecht.
\newblock {S}pectral deflation in {K}rylov solvers: {A} theory of coordinate
  space based methods.
\newblock {\em {E}lectron. {T}rans. {N}umer. {A}nal.}, 39:156--185, 2012.

\bibitem{gutknecht2014deflated}
Martin~H Gutknecht.
\newblock {D}eflated and augmented {K}rylov subspace methods: A framework for
  deflated {B}i{CG} and related solvers.
\newblock {\em {SIAM} {J}. {M}atrix {A}nal. {A}ppl.}, 35(4):1444--1466, 2014.

\bibitem{Soodhalter2020ASO}
Kirk~M. Soodhalter, Eric de~Sturler, and Misha~E Kilmer.
\newblock {A} survey of subspace recycling iterative methods.
\newblock {\em {GAMM}‐{M}itt}, 43, 2020.

\bibitem{soodhalter2016block}
Kirk~M Soodhalter.
\newblock {B}lock {K}rylov subspace recycling for shifted systems with
  unrelated right-hand sides.
\newblock {\em SIAM J. Sci. Comput.}, 38(1):A302--A324, 2016.

\bibitem{soodhalter2014krylov}
Kirk~M Soodhalter, Daniel~B Szyld, and Fei Xue.
\newblock {K}rylov subspace recycling for sequences of shifted linear systems.
\newblock {\em Appl. Numer. Math.}, 81:105--118, 2014.

\bibitem{burke2022augmented}
Liam Burke and Kirk~M Soodhalter.
\newblock {A}ugmented unprojected {K}rylov subspace methods from an alternative
  view of an existing framework.
\newblock {\em arXiv preprint arXiv:2206.12315}, 2022.

\bibitem{boricci2003qcd}
Artan Bori{\c{c}}i, Andreas Frommer, B{\'a}lint Jo{\'o}, Anthony Kennedy, and
  Brian Pendleton.
\newblock {QCD} and {N}umerical {A}nalysis iii.
\newblock In {\em Proceedings of the third international workshop on
  {N}umerical {A}nalysis and {L}attice {QCD}, Edinburgh, UK}. Springer, 2003.

\bibitem{hale2008computing}
Nicholas Hale, Nicholas~J Higham, and Lloyd~N Trefethen.
\newblock {C}omputing $a^{\alpha}$, $\mathrm{log}(a)$, and related matrix
  functions by contour integrals.
\newblock {\em {SIAM} {J}. {N}umer. {A}nal.}, 46(5):2505--2523, 2008.

\bibitem{HornJohnson}
Roger~A. Horn and Charles~R. Johnson.
\newblock {\em {T}opics in matrix analysis}.
\newblock {C}ambridge {U}niversity {P}ress, {C}ambridge, 1994.

\bibitem{Henrici77}
Peter Henrici.
\newblock {\em Applied and Computational Complex Analysis. {V}ol. 2}.
\newblock Wiley Classics Library. John Wiley \& Sons, Inc., New York, 1991.

\bibitem{burke2024krylov}
Liam Burke and Stefan G{\"u}ttel.
\newblock Krylov subspace recycling with randomized sketching for matrix
  functions.
\newblock {\em {SIAM} {J}. {M}atrix {A}nal. {A}ppl.}, 45(4):2243--2262, 2024.

\end{thebibliography}

\end{document}